\theoremstyle{plain}
\newtheorem{definition}{Definition}[section]
\newtheorem{proposition}[definition]{Proposition}
\theoremstyle{definition}
\newtheorem{remark}[definition]{Remark}
\newtheorem{example}[definition]{Example}
\newcommand{\Lc}{\mathcal{L}}
\newcommand{\Ec}{\mathcal{E}}
\newcommand{\Fc}{\mathcal{F}}
\newcommand{\Rc}{\mathcal{R}}
\newcommand{\Pc}{\mathcal{P}}
\newcommand{\rk}{{\rm rk\,}}
\newcommand{\N}{{\mathbb{N}}}
\newcommand{\R}{{\mathbb{R}}}
\DeclareMathOperator{\im}{im}
\DeclareMathOperator{\init}{init}
\DeclareMathOperator{\ter}{ter}
\newcommand{\ddt}{{\textstyle\frac{d}{dt}}}
\DeclareMathAlphabet{\mathcal}{OMS}{cmsy}{m}{n}
\DeclareMathAlphabet{\mathpzc}{OT1}{pzc}{m}{it}
\newcommand{\cT}{\scalebox{.85}{$\mathpzc{T}$}}
\newcommand{\cR}{\scalebox{.85}{$\mathpzc{R}$}}
\newcommand{\cCl}{\scalebox{1.27}{$\mathpzc{C}$}}
\newcommand{\cC}{\scalebox{.85}{$\mathpzc{C}$}}
\newcommand{\cD}{\scalebox{.85}{$\mathpzc{D}$}}
\newcommand{\cDs}{\scalebox{.7}{$\mathpzc{D}$}}
\newcommand{\cI}{\scalebox{.85}{$\mathpzc{I}$}}
\newcommand{\cJ}{\scalebox{.7}{$\mathpzc{J}$}}
\newcommand{\cLl}{\scalebox{1.27}{$\mathpzc{L}$}}
\newcommand{\cL}{\scalebox{.85}{$\mathpzc{L}$}}
\newcommand{\cN}{\scalebox{.85}{$\mathpzc{N}$}}
\newcommand{\cO}{\scalebox{.85}{$\mathpzc{O}$}}
\newcommand{\cS}{\scalebox{.85}{$\mathpzc{S}$}}
\newcommand{\cV}{\scalebox{.85}{$\mathpzc{V}$}}
\begin{document}

\date{\today}
\journal{Journal of Geometry and Physics}

\begin{frontmatter}



\title{Port-Hamiltonian formulation of nonlinear electrical circuits}


\author[1]{H. Gernandt}\ead{hannes.gernandt@tu-ilmenau.de}
\author[2]{F. E. Haller} \ead{frederic.haller@uni-hamburg.de}
\author[2]{T. Reis} \ead{timo.reis@uni-hamburg.de}
\author[4]{A. J. van der Schaft} \ead{a.j.van.der.schaft@rug.nl}

\address[1]{TU Ilmenau, Weimarer Straße 25, 98693 Ilmenau, Germany} \address[2]{Universität Hamburg, Bundesstraße 55, 20146 Hamburg, Germany
}
\address[4]{Bernoulli Institute for Mathematics, CS and AI, University of Groningen, The Netherlands}

\begin{abstract}
We consider nonlinear electrical circuits for which we derive a port-Hamiltonian formulation. After recalling a framework for nonlinear port-Hamiltonian systems, we model each circuit component as an individual port-Hamiltonian system. The overall circuit model is then derived by considering a port-Hamiltonian interconnection of the components. We further compare this modelling approach with standard formulations of nonlinear electrical circuits.
\end{abstract}



\begin{keyword}
Port-Hamiltonian system \sep electrical circuit \sep graph \sep Dirac structure \sep Lagrangian submanifold \sep resistive relation \sep differential-algebraic equations

\MSC[2020] 34A09 \sep 37J39 \sep 53D12 \sep 93C10 \sep 94C15

\end{keyword}

\end{frontmatter}

\section{Introduction}
\noindent Port-Hamiltonian system models encompass a very large class of nonlinear physical systems \cite{JvdS14,vdS17}  and arise from port-based network modelling of complex lumped parameter systems from various physical domains, such as, for instance, mechanical and electrical systems. Modelling by port-Hamiltonian systems has gained a lot of attention, see, for instance, the surveys \cite{JvdS14, vdS13}. Tremendous progress has been recently made in port-Hamiltonian modelling of constrained dynamical systems, which leads to differential-algebraic equations \cite{BMXZ18,MMW18,MvdS18,MvdS19,vdS13}. This enables to apply the framework to modelling of multibody systems with holonomic and non-holonomic constraints as well as electrical circuits. Examples of the latter class has been considered from a~port-Hamiltonian point of view in \cite{vdS10,JvdS14,vdS13, VvdS10a}. However, an approach to electrical circuits has been only made for the case where the circuit contains only capacitances and inductances \cite{BMvdS95}. The recent progress in port-Hamiltonian differential-algebraic equations however allows to treat a~by far wider class of electrical circuits. This is exactly the purpose of this article, where we consider a~variety of electrical components, such as resistances, capacitances, inductances, diodes, transformers, transistors, current sources and voltage sources from a~port-Hamiltonian perspective. Thereafter, we consider the circuit interconnection structure by utilizing the underlying graph of the given electrical circuit. This gives rise to a port-Hamiltonian model, which only incorporates the Kirchhoff laws. Finally, the port-Hamiltonian model of the electrical circuit is obtained by an interconnection with the individual port-Hamiltonian systems representing the components.\\
We will compare the resulting dynamical system with well-known formulations of nonlinear electrical circuits like the (charge/flux-oriented) \emph{modified nodal analysis} and the \emph{modified loop analysis}.

%
%


\section{Port-Hamiltonian systems and their interconnections}
\subsection{Port-Hamiltonian DAE systems}
\noindent We review some basics in port-Hamiltonian differential-algebraic equations (DAEs) from \cite{MvdS18,MvdS19}. An important concept is that of the \emph{Dirac structure}, which describes the power preserving energy-routing of the system. In a very general setting, a Dirac structure on a manifold
 $\mathcal M$ is defined \cite[Def.~2.2.1]{Cou90} as a {certain} subbundle of $\mathcal D\subset T\mathcal M\oplus T^*\mathcal M$ (i.e., the direct sum of the tangent bundle and co-tangent bundle of $\mathcal{M}$). It turns out that, even for nonlinear circuits, this general definition is not needed, and we may introduce Dirac structures only for the simple case where $\mathcal M=\R^n$ (which gives rise to the identification $T^*\R^n\cong T\R^n\cong\R^n\times\R^n$) and $\mathcal D\subset \R^n\times\R^n$ is a~subspace.

\begin{definition}[Dirac structure]\label{def-Dir}
A subspace $\mathcal D \subset \R^n\times \R^n$ is called a \emph{Dirac structure}, if
for all $f,e\in\R^n$ holds
\begin{equation*}
(f,e)\in \mathcal D\;\Longleftrightarrow\; \forall \, (\hat{f},\hat{e})\in \mathcal D:\; e^\top \hat{f}+\hat{e}^\top f=0.
\end{equation*}
\end{definition}
We will also write $(f,e)\in \mathcal D\subset \mathcal F\times\mathcal E$, where $\mathcal F$ denotes the space of \emph{flows} and $\mathcal E=\R^n\cong\mathcal F^*$ denotes the space of \emph{efforts}. A useful characterisation of Dirac structures is the following.
\begin{proposition}[\!\! {\cite[Prop.~1.1.5]{Cou90}}]\label{prop-kernel}
A subspace $\mathcal D\subset \R^n\times\R^n$ is a Dirac structure if, and only if, there exist $K,L\in\R^{n\times n}$ with $KL^\top+LK^\top=0$ and $\rk[K\;\;L]=n$, such that
\begin{equation}\label{eq:kernel}
\mathcal D=\left\{(f,e)\in\R^n\times\R^n~\vert~ Kf+Le=0\right\}.
\end{equation}
\end{proposition}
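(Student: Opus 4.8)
The plan is to recognise that the defining biconditional in Definition~\ref{def-Dir} says exactly that $\mathcal D$ is a \emph{Lagrangian} (i.e.\ maximal isotropic) subspace of $\R^n\times\R^n\cong\R^{2n}$ with respect to the symmetric nondegenerate bilinear form
\[
\langle\!\langle z,\hat z\rangle\!\rangle:=e^\top\hat f+\hat e^\top f=z^\top J\hat z,\qquad z=\begin{bmatrix}f\\e\end{bmatrix},\ \hat z=\begin{bmatrix}\hat f\\\hat e\end{bmatrix},\ J=\begin{bmatrix}0&I_n\\I_n&0\end{bmatrix}.
\]
Writing $\mathcal D^{\perp_J}$ for the $J$-orthogonal complement of a subspace, the condition ``$\forall(\hat f,\hat e)\in\mathcal D:\ e^\top\hat f+\hat e^\top f=0$'' is precisely ``$(f,e)\in\mathcal D^{\perp_J}$''. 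Hence the whole content of the definition is the single identity $\mathcal D=\mathcal D^{\perp_J}$, and the proof reduces to translating this identity into the stated matrix conditions and back.

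The technical heart is a computational lemma: for \emph{every} $M\in\R^{n\times 2n}$ with $\mathcal D=\ker M$ one has $\mathcal D^{\perp_J}=\ran(JM^\top)$. Indeed, $z\in\mathcal D^{\perp_J}$ means $Jz$ is standard-orthogonal to $\ker M$, i.e.\ $Jz\in(\ker M)^\perp=\ran(M^\top)$; since $J=J^{-1}$ this is equivalent to $z\in\ran(JM^\top)$. Alongside this I would record the algebraic translation obtained by splitting $M=[K\;\;L]$, namely $MJM^\top=KL^\top+LK^\top$, so that the hypothesis $KL^\top+LK^\top=0$ is the same as $MJM^\top=0$, which in turn says $\ran(JM^\top)\subseteq\ker M$, that is, $\mathcal D^{\perp_J}\subseteq\mathcal D$.

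With these two facts the two implications are short. For the ``if'' direction, given $K,L$ as stated put $M=[K\;\;L]$; then $\rk M=n$ yields $\dim\ker M=n$ and $\dim\ran(JM^\top)=\rk M=n$, while $KL^\top+LK^\top=0$ gives $\mathcal D^{\perp_J}=\ran(JM^\top)\subseteq\ker M=\mathcal D$, so equal dimensions force $\mathcal D=\mathcal D^{\perp_J}$ and $\mathcal D$ is a Dirac structure. For the ``only if'' direction, nondegeneracy of $J$ gives $\dim\mathcal D+\dim\mathcal D^{\perp_J}=2n$, so $\mathcal D=\mathcal D^{\perp_J}$ forces $\dim\mathcal D=n$; choosing any $M$ of full row rank $n$ with $\ker M=\mathcal D$ and splitting $M=[K\;\;L]$ gives $\rk[K\;\;L]=n$, and $\mathcal D=\mathcal D^{\perp_J}=\ran(JM^\top)\subseteq\ker M$ shows the columns of $JM^\top$ lie in $\ker M$, hence $MJM^\top=KL^\top+LK^\top=0$.

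The computations themselves are routine; the genuine step is the reformulation $\mathcal D=\mathcal D^{\perp_J}$ together with the lemma $\mathcal D^{\perp_J}=\ran(JM^\top)$, which rest on $J^{-1}=J$ and on the standard identity $(\ker M)^\perp=\ran(M^\top)$. The one place demanding care is the dimension bookkeeping in both directions: one must invoke the full-rank hypothesis (respectively the nondegeneracy of $J$) to upgrade the inclusion $\mathcal D^{\perp_J}\subseteq\mathcal D$ to the equality required by the Dirac property, rather than trying to check both inclusions directly.
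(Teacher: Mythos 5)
Your proof is correct and complete. Note that the paper offers no argument of its own for this proposition---it is stated purely with the citation \cite[Prop.~1.1.5]{Cou90}---so there is no internal proof to compare against; what you have written is a self-contained version of the standard argument that this citation stands for. Your organization is sound: (i) the defining biconditional of Definition~\ref{def-Dir} is exactly the identity $\mathcal D=\mathcal D^{\perp_J}$ for the nondegenerate symmetric form $z^\top J\hat z$ with $J=\left[\begin{smallmatrix}0&I\\I&0\end{smallmatrix}\right]$ (the forward implication of the biconditional is isotropy $\mathcal D\subseteq\mathcal D^{\perp_J}$, the backward implication is the reverse inclusion); (ii) the lemma $\mathcal D^{\perp_J}=\ran(JM^\top)$ for any $M$ with $\ker M=\mathcal D$ follows correctly from $(\ker M)^\perp=\ran M^\top$ together with $J^\top=J=J^{-1}$; (iii) the computation $MJM^\top=KL^\top+LK^\top$ correctly identifies the algebraic hypothesis with the inclusion $\mathcal D^{\perp_J}\subseteq\mathcal D$; and (iv) the dimension bookkeeping ($\rk M=n$ in the ``if'' direction, nondegeneracy of $J$ giving $\dim\mathcal D+\dim\mathcal D^{\perp_J}=2n$ in the ``only if'' direction) legitimately upgrades that inclusion to equality. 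One detail you handled in the right order, which is worth emphasizing: in the ``only if'' direction you must establish $\dim\mathcal D=n$ \emph{before} choosing a full-row-rank $M\in\R^{n\times 2n}$ with $\ker M=\mathcal D$, since such an $M$ exists only for subspaces of dimension exactly $n$; your appeal to nondegeneracy does precisely this.
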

Now we introduce a~relation describing the energy storage of the system and is called \emph{Lagrange submanifold}. Again, the general definition of Lagrange submanifold as found in {\cite[p.\ 568]{Lee12}} is not needed for nonlinear circuits. It suffices to consider the case of submanifolds  of $\R^{n}\times \R^{n}$. Typically, the manifolds are assumed to be smooth. This can however be relaxed, and we may consider less-smooth manifolds for our purposes.
\begin{definition}[Lagrange submanifold]\label{def-Lag}
A submanifold $\mathcal L\subset \R^{n}\times \R^{n}$ is called \emph{Lagrange submanifold} of $\R^{n}\times\R^{n}$, if for all $x\in \mathcal L$ and ${(v_1,v_2)}\in\R^{n}\times \R^{n}$ holds
\begin{equation*}
{(v_1,v_2)}\in T_x\mathcal L\;\Longleftrightarrow\; \forall {(w_1,w_2)}\in T_x\mathcal L: {v_1^\top w_2-v_2^\top w_1}=0.
\end{equation*}
Hereby, $T_x\mathcal{L}\subset\R^{n}\times\R^{n}$ stands for tangent space of $\mathcal{L}$ at $x\in\mathcal{L}$.
\end{definition}
In the following we show that gradient fields induce Lagrange submanifolds.
\begin{proposition}\label{prop-gradient}
Let $Q:\R^n\to\R^n$ be continuously differentiable. Then the submanifold consisting of the graph of $Q$, i.e.,
\[\mathcal L_Q\coloneqq\{(x,Q(x))\in \R^n\times\R^n~|~x\in\R^n\}\]
is a~Lagrange submanifold if, and only if, $Q$ is a~gradient field. In other words,
there exists some twice continuously differentiable function $H:\R^n\to\R$ such that $\nabla H=Q$.
\end{proposition}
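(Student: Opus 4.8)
The plan is to reduce the Lagrangian condition to a pointwise symmetry property of the Jacobian of $Q$, and then to recognise this as the integrability condition for $Q$ to be a gradient. First I would note that, since $Q$ is continuously differentiable, $\mathcal{L}_Q$ is a $C^1$-submanifold of $\R^n\times\R^n$ of dimension $n$ whose tangent space at a point $(x,Q(x))$ is the graph of the Jacobian $\mathrm{D}Q(x)\in\R^{n\times n}$, that is,
\[T_{(x,Q(x))}\mathcal{L}_Q=\bigl\{(v,\mathrm{D}Q(x)v)\;\big|\;v\in\R^n\bigr\}.\]
Abbreviating $A\coloneqq \mathrm{D}Q(x)$, I would then determine the symplectic orthogonal complement of this tangent space with respect to the form $(v_1,v_2),(w_1,w_2)\mapsto v_1^\top w_2-v_2^\top w_1$ appearing in Definition~\ref{def-Lag}. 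A short computation shows that $(v_1,v_2)$ pairs to zero with every $(w,Aw)$ exactly when $v_2=A^\top v_1$, so the complement is the graph of $A^\top$. The biconditional in Definition~\ref{def-Lag} states precisely that $T_{(x,Q(x))}\mathcal{L}_Q$ coincides with its symplectic complement; hence $\mathcal{L}_Q$ is a Lagrange submanifold if and only if $\operatorname{graph}(A)=\operatorname{graph}(A^\top)$, i.e.\ $\mathrm{D}Q(x)=\mathrm{D}Q(x)^\top$, for every $x\in\R^n$.

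The problem thus becomes: the Jacobian $\mathrm{D}Q$ is symmetric everywhere if and only if $Q=\nabla H$ for some $H\in C^2$. The forward implication is immediate: if $Q=\nabla H$ with $H$ twice continuously differentiable, then $\mathrm{D}Q$ is the Hessian of $H$, which is symmetric by the equality of mixed second partials (Schwarz's theorem). For the converse I would use that symmetry of $\mathrm{D}Q$ is the closedness condition $\partial_iQ_j=\partial_jQ_i$ and exploit that $\R^n$ is convex. Concretely, I would set
\[H(x)\coloneqq\int_0^1 Q(tx)^\top x\,\mathrm{d}t,\]
differentiate under the integral sign, and use $\partial_jQ_i=\partial_iQ_j$ to rewrite the $j$-th partial integrand as the total $t$-derivative of $t\mapsto tQ_j(tx)$; integrating then yields $\partial_jH=Q_j$, i.e.\ $\nabla H=Q$. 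Since $\nabla H=Q$ is continuously differentiable, $H$ is automatically twice continuously differentiable.

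I expect the main obstacle to lie in the converse direction, namely in passing from the local closedness condition $\mathrm{D}Q=\mathrm{D}Q^\top$ to the global existence of a potential $H$ — this is exactly the content of the Poincaré lemma, and the convexity of $\R^n$ is what makes the explicit line-integral construction work. Differentiating under the integral (justified because $Q\in C^1$) and carefully applying the symmetry to collapse the integrand into a total derivative is the one genuinely computational point; the reduction to symmetry of the Jacobian and the forward direction are routine linear algebra together with Schwarz's theorem.
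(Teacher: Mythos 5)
Your proof is correct and complete, but it takes a more self-contained route than the paper, whose entire proof is a citation: the paper invokes Lee's result that the graph of a $1$-form is Lagrangian if and only if the form is closed, combines it with simple connectedness of $\R^n$ to get exactness, and waves off the $C^1$ (rather than $C^\infty$) regularity as ``a straightforward modification.'' You instead prove both halves from scratch: the linear-algebra computation showing that the symplectic complement of the graph of $A=\mathrm{D}Q(x)$ is the graph of $A^\top$, so that the condition of Definition~\ref{def-Lag} reduces to $\mathrm{D}Q(x)=\mathrm{D}Q(x)^\top$ everywhere (this is precisely the coordinate form of closedness of the $1$-form $\sum_i Q_i\,\mathrm{d}x_i$ that Lee's proposition encodes), and then an explicit Poincar\'e lemma via the radial potential $H(x)=\int_0^1 Q(tx)^\top x\,\mathrm{d}t$, which exploits convexity of $\R^n$ rather than mere simple connectedness. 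What your approach buys: it works verbatim at the stated $C^1$ regularity (no appeal to a ``modification'' of a smooth-category proof), and it produces the potential $H$ by an explicit formula. What the paper's approach buys: brevity, and a formulation that generalizes to simply connected manifolds where no convexity structure is available. Both arguments rest on the same mathematical skeleton (Lagrangian $\Leftrightarrow$ closed $\Leftrightarrow$ exact); yours simply discharges every step internally.
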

\begin{proof}
Using that $\R^n$ is simply connected, the case of smooth $Q$ follows from \cite[Prop.~22.12]{Lee12}. The less smooth case follows by a~straightforward modification of the proof of \cite[Prop.~22.12]{Lee12}.
\end{proof}
The case where a~Lagrangian submanifold is a~subspace deserves special attention.
\begin{proposition}[\!\! {\cite[Prop.~5.2]{MvdS18}}]\label{prop-kernel_L}
A subspace $\mathcal L\subset \R^n\times\R^n$ is a~Lagrangian submanifold if, and only if,
\[\mathcal L=\left\{(f,e)\in\R^n\times\R^n~\vert~ S^\top f=P^\top e\right\}\]
for some matrices $S,P\in\R^{n\times n}$ with $S^\top P=P^\top S$ and $\rk[S^\top\,P^\top]=n$.
\end{proposition}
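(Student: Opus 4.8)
The plan is to exploit that a linear subspace $\mathcal L$ equals its own tangent space at every point, so that Definition~\ref{def-Lag} reduces to the purely linear statement that $\mathcal L$ coincides with its orthogonal complement relative to the nondegenerate skew-symmetric form $\omega\big((v_1,v_2),(w_1,w_2)\big)=v_1^\top w_2-v_2^\top w_1$; that is, $\mathcal L$ is a Lagrangian subspace in the usual linear-algebra sense. The whole argument then rests on translating between a \emph{kernel representation} $\mathcal L=\{(f,e):S^\top f=P^\top e\}=\ker[\,S^\top\ \ {-}P^\top\,]$ and an \emph{image representation} $\mathcal L=\ran\big(\begin{smallmatrix}P\\ S\end{smallmatrix}\big)=\{(Pc,Sc):c\in\R^n\}$.

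For the implication ``$\Leftarrow$'' I would start from the kernel representation and first verify the inclusion $\{(Pc,Sc):c\in\R^n\}\subseteq\mathcal L$: inserting $f=Pc$, $e=Sc$ gives $S^\top f-P^\top e=(S^\top P-P^\top S)c=0$ by the symmetry hypothesis. Since $\rk[S^\top\ P^\top]=n$ forces $\big(\begin{smallmatrix}P\\ S\end{smallmatrix}\big)$ to have trivial kernel, the left-hand set has dimension $n$, while the kernel representation has dimension $2n-\rk[S^\top\ {-}P^\top]=n$; equal dimensions upgrade the inclusion to the identity $\mathcal L=\{(Pc,Sc):c\in\R^n\}$. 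Isotropy is then immediate, since for two elements $(Pc,Sc)$ and $(P\hat c,S\hat c)$ one computes $\omega=c^\top(P^\top S-S^\top P)\hat c=0$; and because $\dim\mathcal L=n$ equals half the ambient dimension, a half-dimensional isotropic subspace is automatically its own $\omega$-complement, hence Lagrangian. (Alternatively, I would compute $\mathcal L^{\perp_\omega}$ directly from the image representation and recover exactly the kernel representation.)

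For the converse ``$\Rightarrow$'' I would use that a Lagrangian subspace satisfies $\dim\mathcal L=n$, which follows from $\dim\mathcal L+\dim\mathcal L^{\perp_\omega}=2n$ together with $\mathcal L=\mathcal L^{\perp_\omega}$. Choosing a basis of $\mathcal L$ and stacking it as the columns of a full-rank matrix $\big(\begin{smallmatrix}P\\ S\end{smallmatrix}\big)\in\R^{2n\times n}$ yields the image representation and the rank condition $\rk[S^\top\ P^\top]=n$. The identity $\omega\big((Pc,Sc),(P\hat c,S\hat c)\big)=c^\top(P^\top S-S^\top P)\hat c$, which must vanish for all $c,\hat c$ by isotropy, forces $S^\top P=P^\top S$. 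Finally I would pass back from the image to the kernel representation exactly as before: the symmetry gives one inclusion and the matching dimensions give equality.

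The routine ingredients — the dimension of a Lagrangian subspace, the identity $\rk[S^\top\ P^\top]=\rk\big(\begin{smallmatrix}S\\ P\end{smallmatrix}\big)$, and the invariance of rank under a sign change of a block of columns — I would state without fuss. The one place that deserves care, and the main obstacle, is precisely the equivalence between the kernel and image representations: it is here that the algebraic hypothesis $S^\top P=P^\top S$ enters to secure one inclusion, and the rank/dimension bookkeeping enters to promote that inclusion to an equality. Keeping the sign in $\ker[\,S^\top\ \ {-}P^\top\,]$ straight is essential, since it is exactly this sign, interacting with the skew-symmetry of $\omega$, that makes the \emph{symmetric} condition $S^\top P=P^\top S$ — rather than a skew condition — the correct one.
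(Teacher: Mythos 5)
Your proof is correct, and there is nothing in the paper to compare it against: the paper states this proposition with a citation to \cite[Prop.~5.2]{MvdS18} and gives no proof of its own, so your argument serves as a self-contained replacement for the external reference. The structure is sound at every step: since a linear subspace is its own tangent space at each point, Definition~\ref{def-Lag} does reduce to $\mathcal L=\mathcal L^{\perp_\omega}$ for the canonical symplectic form $\omega\bigl((v_1,v_2),(w_1,w_2)\bigr)=v_1^\top w_2-v_2^\top w_1$; the hypothesis $S^\top P=P^\top S$ yields the inclusion $\ran\bigl(\begin{smallmatrix}P\\S\end{smallmatrix}\bigr)\subseteq\ker[\,S^\top\ \ -P^\top\,]$, and the rank condition gives both sides dimension $n$ (rank is unchanged under permuting the blocks and negating a block of columns), which upgrades the inclusion to equality; isotropy of the image representation plus the half-dimension count, via nondegeneracy of $\omega$, then gives $\mathcal L=\mathcal L^{\perp_\omega}$, and the converse direction reverses these steps correctly. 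You also rightly flag the one genuinely delicate point, namely that the interplay of the sign in the kernel representation with the skew-symmetry of $\omega$ is what produces the \emph{symmetric} compatibility condition $S^\top P=P^\top S$, in contrast to the skew condition $KL^\top+LK^\top=0$ appearing in the kernel representation of Dirac structures (Proposition~\ref{prop-kernel}).
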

Another concept needed for port-Hamiltonian systems is that of the \emph{resistive relation}, which represents the internal energy dissipation of the system. It is defined as a relation on the \emph{space of resistive flows} $\mathcal F_R$ and \emph{space of resistive efforts} $\mathcal E_R$ \cite[Sec.~2.4]{JvdS14}. In our setting, both $\mathcal E_R$ and $\mathcal F_R$ will be again $\R^n$.

\begin{definition}[Resistive relation]\label{def-res}
A relation $\mathcal R\subset\R^n\times\R^n$ is called \emph{resistive}, if
\[\forall~(f_\Rc,e_\Rc)\in\Rc:\ e_\Rc^\top f_\Rc\leq0.\]
\end{definition}
Having defined Dirac structures, Lagrange submanifolds and resistive relations, we are now ready to introduce port-Hamiltonian systems. Again note this class can be defined in a~more general setting by using manifolds \cite{MvdS19,JvdS14}. We `boil this down' to the setup which is needed for electrical circuits.
\begin{definition}[Port-Hamiltonian (pH) system]\label{def-pH}
Let $n_\Lc,n_\Rc,n_\Pc\in\N_0$ and denote \[\mathcal F_{\Lc}=\mathcal E_{\Lc}=\R^{n_\Lc},\quad\mathcal F_\Rc=\mathcal E_\Rc=\R^{n_\Rc},\quad\mathcal F_\Pc=\mathcal E_\Pc=\R^{n_\Pc}.\]
 A \emph{port-Hamiltonian (pH) system}  is a triple $(\mathcal D,\mathcal L,\mathcal R)$, where $\mathcal D\subset (\mathcal F_{\mathcal L}\times\mathcal F_R\times\mathcal F_P)\times(\mathcal E_{\mathcal L}\times\mathcal E_R\times\mathcal E_P)$ is a~Dirac structure (see Definition~\ref{def-Dir}), $\mathcal L\subset \mathcal F_{\mathcal L}\times \mathcal E_{\mathcal L}$ is a~Lagrange submanifold (see Definition~\ref{def-Lag}) and $\mathcal R\subset\mathcal F_\Rc\times\mathcal E_\Rc$ a~resistive relation (see Definition~\ref{def-res}).\\
The elements of $\mathcal F_{\mathcal L}$, $\mathcal E_{\mathcal L}$, $\mathcal F_\Rc$, $\mathcal E_\Rc$, $\mathcal F_\Pc$, $\mathcal E_\Pc$ are, accordingly, called the \emph{energy-storing flows/efforts}, \emph{resistive flows/efforts} and  \emph{external flows/efforts}.\\
The \emph{dynamics} of the pH system are specified by the differential inclusion
\[
\begin{aligned}
 (-\ddt x(t),f_\Rc(t),f_\Pc(t),e_\Lc(t),e_\Rc(t),e_\Pc(t))\in\mathcal D,\ (x(t),e_\Lc(t))\in\mathcal L,\ (f_\Rc(t),e_\Rc(t))\in\mathcal R.
\end{aligned}
\]
\end{definition}
Note that, in this paper, we do not investigate any solvability theory of the resulting equations.

\begin{figure}
	\centering
	\includegraphics[width=0.4\textwidth]{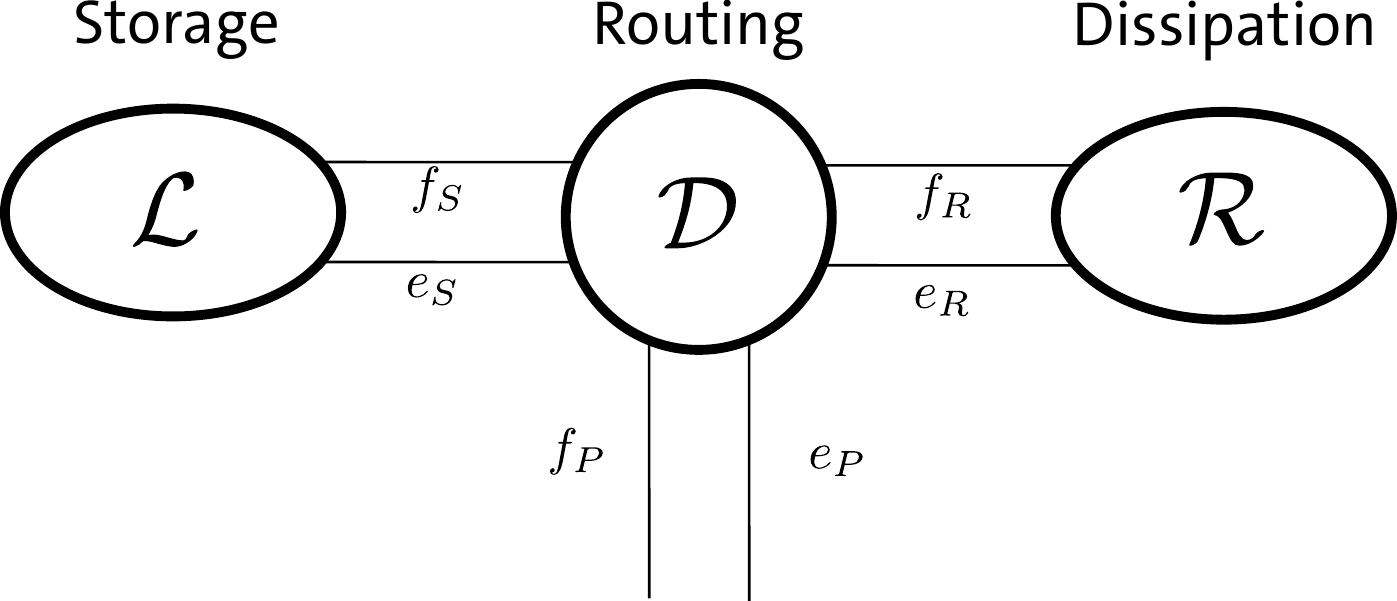}
	\caption{Visual representation of a pH system.}
\end{figure}

\subsection{Interconnection of port-Hamiltonian systems}
\noindent A key property of pH systems is that this class is closed under power-conserving interconnection. Different methods of how to design such interconnections are for example elucidated in \cite{BCGM18,CvdSB07,JvdS14,VvdS10b}. The interconnection we will be using for the electrical circuits follows the ideas presented in \cite{JvdS14}.
Interconnection is based on the assumption that each system has two kinds of external flows and efforts, namely specific and to-be-linked ones, where the latter ones are belonging to the same space for each Dirac structure.
\begin{definition}[Interconnection of pH systems]\label{def:pHint}
For $i=1,2$, let $(\mathcal D_i,\mathcal L_i,\mathcal R_i)$ be two pH systems with specific flow and effort spaces,
\[\begin{aligned}
\mathcal F_i=&\, \mathcal F_{\mathcal{L}i}\times \mathcal F_{\Rc i}\times \mathcal F_{\Pc i}\times\mathcal F_{\Pc{\rm link}},\quad \mathcal E_i=\mathcal F_{\mathcal{L}i}\times\mathcal E_{\Rc i}\times\mathcal E_{\Pc i}\times\mathcal E_{\Pc{\rm link}},
\end{aligned}\]
which are
subdivided into an energy-storing, a~resistive, a~specific external part, and a~to-be-linked part.
We define the \emph{interconnection} of $(\mathcal D_1,\mathcal L_1,\mathcal R_1)$ and $(\mathcal D_2,\mathcal L_2,\mathcal R_2)$,
\[(\mathcal D_1,\mathcal L_1,\mathcal R_1)\circ(\mathcal D_2,\mathcal L_2,\mathcal R_2):=(\mathcal D,\mathcal L,\mathcal R),\]
 \emph{with respect to} $(\mathcal F_{\Pc{\rm link}},\mathcal E_{\Pc{\rm link}})$ as the pH system given by 
 \[
\begin{aligned}
&\mathcal D\coloneqq \big\{((f_{\Lc1},f_{\Lc2}),(f_{\Rc1},f_{\Rc2}),(f_{\Pc1},f_{\Pc2}),
(e_{\Lc1},e_{\Lc2}),(e_{\Rc1},e_{\Rc2}),(e_{\Pc1},e_{\Pc2}))\\
&\qquad\qquad\qquad~\vert~\exists(f_{{\rm link}},e_{{\rm link}})\in\mathcal F_{{\rm link}}\times\mathcal E_{{\rm link}}:(f_{\Lc1},f_{\Rc1},f_{\Pc1},f_{{\rm link}},e_{\Lc1},e_{\Rc1},e_{\Pc1},e_{{\rm link}})\in\mathcal D_1\\
&\qquad\qquad\qquad\qquad\,\wedge\, (f_{\Lc2},f_{\Rc2},f_{\Pc2},-f_{{\rm link}},e_{\Lc2},e_{\Rc2},e_{\Pc2},e_{{\rm link}})\in\mathcal D_2\big\},
\end{aligned}
\]
and
 \[
\begin{aligned}
\Lc=&\,\{\left.((f_{\Lc1},f_{\Lc2}),(e_{\Lc1},e_{\Lc2}))\in (\mathcal{F}_{\Lc1}\times \mathcal{F}_{\Lc2})\times (\mathcal{E}_{\Lc1}\times \mathcal{E}_{\Lc2})\,\right|\;(f_{\Lc1},e_{\Lc1})\in\mathcal{L}_1\,\wedge\,(f_{\Lc2},e_{\Lc2})\in\mathcal{L}_2\},\\
\Rc=&\,\{\left.((f_{\Rc1},f_{\Rc2}),(e_{\Rc1},e_{\Rc2}))\in (\mathcal{F}_{\Rc1}\times \mathcal{F}_{\Rc2})\times (\mathcal{E}_{\Rc1}\times \mathcal{E}_{\Rc2})\,\right|\;(f_{\Rc1},e_{\Rc1})\in\mathcal{R}_1\,\wedge\,(f_{\Rc2},e_{\Rc2})\in\mathcal{R}_2\}.
\end{aligned}
\]
%
\end{definition}
The above constructed set $\mathcal{D}$ is indeed a~Dirac structure \cite[Chap.~6]{JvdS14}. It is obvious that $\mathcal{L}$ is a~Lagrange submanifold and $\mathcal{R}$ is a~resistive relation. Hence, the interconnection of pH systems results in a pH system.\\
  Next we introduce the Cartesian product of pH systems, which simply means that several coexisting pH systems are united to one pH system. In terms of Definition~\ref{def:pHint}, it means that several pH systems are interconnected with trivial linking ports.
  That is, for pH systems
 $(\mathcal D_1,\mathcal L_1,\mathcal R_1)$ and $(\mathcal D_2,\mathcal L_2,\mathcal R_2)$ we add
 artificial and trivial linking ports $\Fc_{\rm link}=\Ec_{\rm link}=\{0\}$ (which do not affect the dynamic behavior) and interconnect these systems with respect to this trivial port $(\mathcal F_{\Pc{\rm link}},\mathcal E_{\Pc{\rm link}})$. A~coupling of this kind will be denoted by
 $(\mathcal D_1,\mathcal L_1,\mathcal R_1)\times (\mathcal D_2,\mathcal L_2,\mathcal R_2)$.
We further inductively define
\[\bigtimes_{i=1}^n(\mathcal D_i,\mathcal L_i,\mathcal R_i)\coloneqq\left(\bigtimes_{i=1}^{n-1}(\mathcal D_i,\mathcal L_i,\mathcal R_i)\right)\times (\mathcal D_n,\mathcal L_n,\mathcal R_n).\]

\begin{figure}
	\begin{minipage}[t]{0.45\textwidth}
		\centering
		\includegraphics[width=\textwidth]{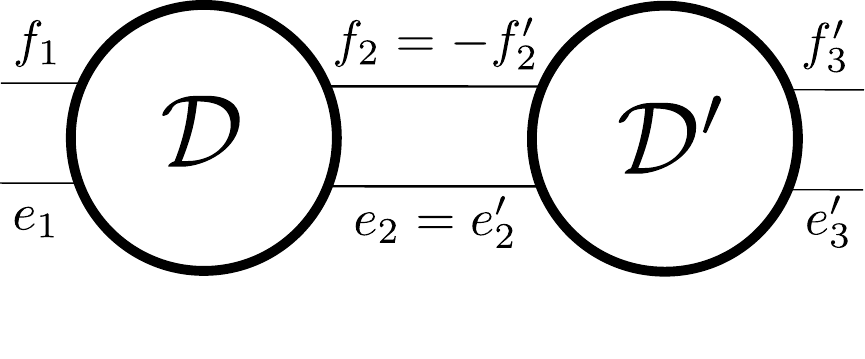}
		\caption{Composition of two Dirac structures.}
	\end{minipage}\hfill
	\begin{minipage}[t]{0.45\textwidth}
		\centering
		\includegraphics[width=\textwidth]{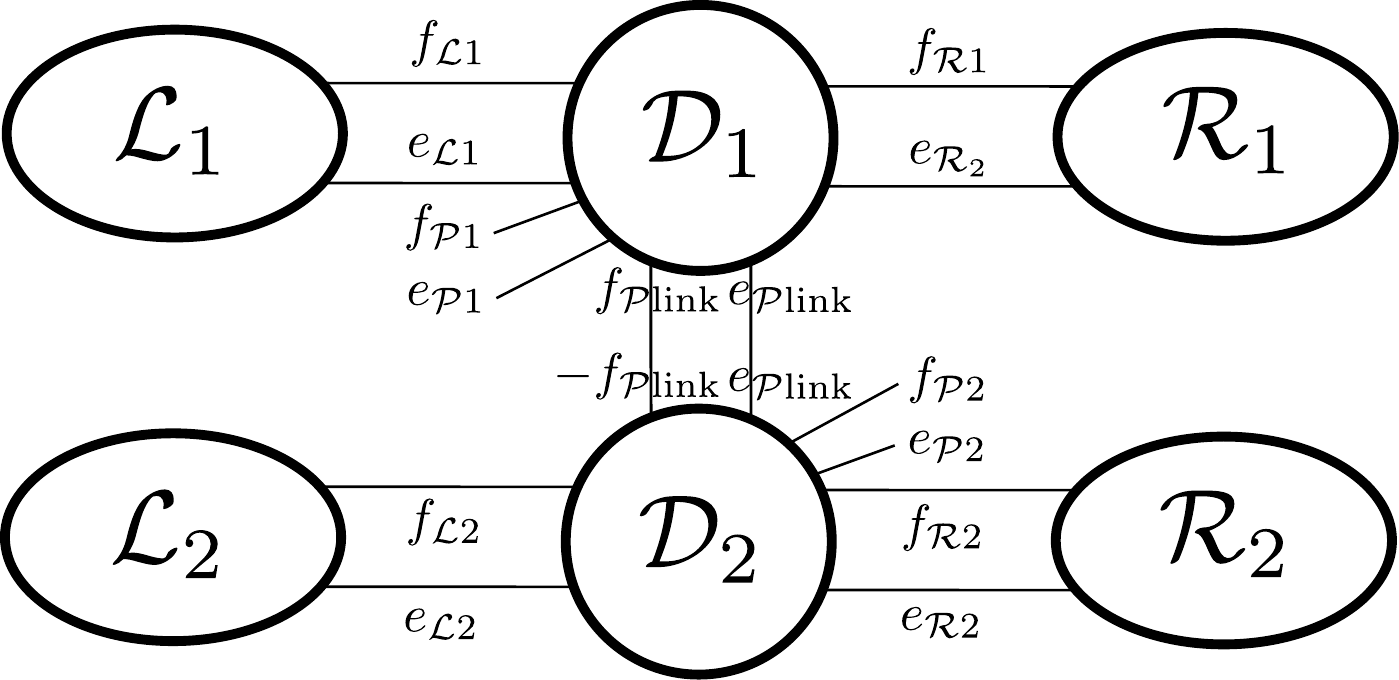}
		\caption{Interconnection of two pH systems.}
	\end{minipage}
\end{figure}


\subsection{Port-Hamiltonian systems on graphs}
 \noindent Now we consider interconnections of pH systems, which are defined via graphs \cite{vdSM13a}.
  This will lead us to the notions of \emph{Kirchhoff-Dirac structure} and \emph{Kirchhoff-Lagrange manifold}. Later we will show that such interconnections correspond to
 the Kirchhoff laws in electrical circuits.
%
To this end, we introduce some basic graph theoretical notions from \cite{Dies17}.
\begin{definition}[Graphs and subgraphs]\!\!
A \emph{directed graph} is a quadruple $\mathcal{G} = (V,E,\init,\ter)$ consisting of a \emph{vertex set} $V$, a \emph{edge set} $E$ and two maps $\init, \ter: E \rightarrow V$ as\-signing to each edge $e$ an {\em initial vertex} $\init(e)$ and a {\em terminal vertex} $\ter(e)$. The edge $e$ is said to be {\em directed from $\init(e)$ to $\ter(e)$}. $\mathcal{G}$ is said to be {\em loop-free}, if $\init(e)\neq\ter(e)$  for all $e\in E$. Let $V' \subset V$ and $E' \subset E$ with
\[ E' \subset \left. E\right|_{V'} := \{ e \in E : \init(e)\in V'\,\wedge\, \ter(e) \in V' \}. \]
Then the triple $(V',E',\left. \init\right|_{E'},\left. \ter\right|_{E'})$ is called a \emph{subgraph of $\mathcal{G}$}. If $E' = \left. E\right|_V'$, then the subgraph is called the \emph{induced subgraph} on $V'$.
If $V' = V$, then the subgraph is called \emph{spanning}.
Additionally a \emph{proper subgraph} is one where $E' \neq E$.
$\mathcal{G}$ is called {\em finite}, if $V$ and $E$ are finite.
\end{definition}
The notion of a~{\em path} in a~directed graph $\mathcal{G} = (V,E,\init,\ter)$ is quite descriptive. However, since a~path may also go through an edge in reverse direction, we define for each $e \in E$ an additional edge $-e \not\in E$ with $\init(-e)=\ter(e)$ and $\ter(-e)=\init(e)$.
\begin{definition}[Paths, connectivity, cycles, forests and trees]\!\!
Let $\mathcal{G} = (V,E,\init,\ter)$ be a~directed finite graph. An $r$-tuple $e = (e_1,\ldots,e_r) \in ({E}\cup -E)^r$ is called a \emph{path from $v$ to $w$}, if
\[\begin{aligned}
&\init(e_1),\ldots,\init(e_r)\text{ are distinct},\\
&\ter(e_i) = \init(e_{i+1}) \; \;  \forall i \in \{ 1,\ldots, r-1 \},\\
&\init(e_1)=v\,\wedge\,\ter(e_r)=w.
\end{aligned}\]
A \emph{cycle} is a~path from $v$ to $v$. Two vertices $v,w$ are \emph{connected}, if there is a path from $v$ to $w$. This gives is an equivalence relation on the vertex set. The induced subgraph on an equivalence class of connected vertices gives a \emph{component} of the graph. A graph is called {\em connected}, if there is only one component.\\
A subgraph $\mathcal{K} = (V,E', \left. \init\right|_{E'}, \left. \ter\right|_{E'})$ of a~directed graph $\mathcal{G}=(V,E,\init,\ter)$ is called a \emph{spanning forest} in $\mathcal{G}$, if $\mathcal{K}$ does not contain any cycles and is maximal with this property, that is, $\mathcal{K}$ is not a~proper subgraph of a~subgraph of $\mathcal{G}$ which does not contain any cycles.
A~subgraph $\mathcal{K}$ is called {\em tree}, if it is a~forest and connected.
\end{definition}
In the context of electrical circuits, finite and loop-free directed graphs are of major importance. These allow to associate a~special matrix \cite[Sec.~3.2]{And91}.
\begin{definition}[Incidence matrix]
Let $\mathcal{G}=(V,E,\init,\ter)$ be a finite and loop-free directed graph. Let $E=\{e_1,\ldots,e_m\}$ and $V=\{v_1,\ldots,v_n\}$. Then the \emph{incidence matrix} of $\mathcal{G}$ is $A_0\in\R^{n\times m}$ with
\[a_{jk}=\begin{cases}1&\init(e_k)=v_j, \\-1&\ter(e_k)=v_j,\\0&\text{otherwise.}\end{cases}\]
\end{definition}
$\mathcal{G}$ has $k\in\N$ components if, and only if, $\rk A_0=n-k$ \cite[p.\ 140]{And91}. This allows to remove up to $k$ rows from $A_0$ such that a~matrix with same rank is obtained. The choice of these to-be-deleted rows has to be done in a~special way: One has to choose a~row set, which corresponds to a vertex set $S$ that contains at most one vertex per component to $\mathcal{G}$. This deletion plays a~crucial role in the following definition of a~special Dirac structure and Lagrange submanifold.

\begin{definition}[Kirchhoff-Dirac structure, Kirchhoff-Lagrange submanifold]\label{def-K}
Assume that $\mathcal{G}=(V,E,\init,\ter)$ is a~finite and loop-free directed graph with
 incidence matrix $A_0\in\R^{n\times m}$. Let $\mathcal{G}_1,\ldots,\mathcal{G}_k$ be the components of $\mathcal{G}$ and let $V_1,\ldots,V_k\subset V$ be the corresponding vertex sets. Let $S\subset V$
 such that $S$ contains at most one vertex form each component, that is
\begin{equation}\label{eq:S}
\forall\, s,s'\in S,\, i\leq k:\;\; v,v'\in V_i\Rightarrow v=v'.
\end{equation}
Let $A\in\R^{(n-k)\times m}$ be constructed from $A_0\in\R^{n\times m}$ by deleting the rows corresponding to the vertices from $S$.
The \emph{Kirchhoff-Dirac structure} of $\mathcal{G}$ is
\begin{equation}\label{KDS}
\begin{aligned}
\mathcal D^S_K(\mathcal{G})\coloneqq\bigg\{(j,i,\phi, u)\in\R^{n-|S|}\times\R^m\times\R^{n-|S|}\times\R^m~\bigg\vert~\begin{bmatrix}I&A\\0&0\end{bmatrix}\begin{pmatrix}j\\ i\end{pmatrix}+\begin{bmatrix}0&0\\A^\top&-I\end{bmatrix}\begin{pmatrix}\phi\\u\end{pmatrix}=0\bigg\}.
\end{aligned}
\end{equation}
Assume that $S=\{v_1,...,v_{|S|}\}$ (which is - by a reordering of the vertices - no loss of generality). Then the \emph{Kirchhoff-Lagrange submanifold} of $\mathcal{G}$ with respect to $S$ is
\begin{equation}\label{KLM}
\mathcal L^S_K(\mathcal{G})\coloneqq \{0\}\times\R^{n-|S|}\subset \R^{n-|S|}\times\R^{n-|S|}.
\end{equation}
\end{definition}

\begin{remark}\label{rem-inv}
By Proposition \ref{prop-kernel}, $\mathcal D^S_K(\mathcal{G})$ in \eqref{KDS} is a~Dirac structure, whereas Proposition~\ref{prop-kernel_L} implies that $\mathcal L^S_K(\mathcal{G})$ in \eqref{KLM} is a~Lagrange submanifold of $\R^{n-|S|}\times\R^{n-|S|}$.\\
The concepts of Definition \ref{def-K} allow to introduce the pH system $(\mathcal D^S_K(\mathcal{G}),\mathcal L^S_K(\mathcal{G}),\{0\})$
with dynamics
\begin{equation}\label{eq:KirchpH}
 (-\ddt q(t),i(t),\phi(t),u(t))\in\mathcal D^S_K(\mathcal{G}),\quad (q(t),\phi(t))\in\mathcal L^S_K(\mathcal{G}).
\end{equation}
%
Then, by the equivalence of $(q(t),\phi(t))\in\mathcal L^S_K(\mathcal{G})$ to $q(t)=0$ and $\phi(t)\in\R^{n-|S|}$, we see that \eqref{eq:KirchpH} holds, if, and only if,
\[q(t)=0\,\wedge\, A i(t)=0\,\wedge\, A^\top \phi_2(t)=-u(t).\]
In particular, $i(t)\in \ker A$ and $u(t)\in \im A^\top$. In the context of electrical circuits, this will indeed represent Kirchhoff's current and voltage law \cite[Thm.~4.5 \& Thm.~4.6]{Rei14}. The choice of $S$ can be interpreted as the set of \emph{grounded vertices}. The quantities $q$, $i$, $\phi$ and $u$ can respectively be thought as the vertex charges, the edge currents, the vertex potentials, and the edge voltages.

Note that \eqref{eq:KirchpH} is indeed a pH system. However, this system is of rather pathological nature, since it does not contain any `true dynamics', as the differential variable $q$ is nulled by the Lagrange submanifold.
Note that these `true dynamics' come into play later on, when we interconnect with dynamic circuit elements like capacitances and inductances.

\noindent
In the terminology of \cite{vdSM13a}, $\mathcal D_K^S(\mathcal{G})$ corresponds to the Kirchhoff-Dirac structure of a graph with $|S|=\emptyset$. Moreover, a~Dirac structure similar to \eqref{KDS} has been used in \cite{vdS10}, with the main difference that in our present case all nodes are considered to be `boundary nodes' in the nomenclature of \cite{vdS10}.
\end{remark}
We briefly present an alternative (slightly less straight-forward) construction of pH systems on graphs, namely by means of cycles instead to vertices.
For a given spanning forest $\mathcal{T}$ of a~loop-free directed graph $\mathcal{G}$ with $n$ edges, $m$ vertices and $k$ connected components, the minimality property yields that the incorporation of any edge of $\mathcal{G}$ not belonging to $\mathcal{T}$ (called {\em chord}) results in a~subgraph with exactly one cycle. Consequently, the set of edges in the complement of $\mathcal{T}$ in $\mathcal{G}$ leads to a~set $\mathcal C=\{\mathcal{C}_1,\ldots,\mathcal{C}_{m-n+k}\}$ of cycles, the so-called  {\em fundamental cycles} (see \cite[p.\ 148]{And91} \& \cite[p.\ 26]{Dies17}). We equip each fundamental cycle with the orientation of its corresponding chord \cite[p.\ 148]{And91} and consider the associated \emph{fundamental cycle matrix} $B\in\R^{(m-n+k)\times m}$ which is defined entrywise by (cf. \cite[Sec.~3.3]{And91})
\[b_{jl}=\begin{cases}1&e_l\in C_j \text{ and the orientations agree,} \\-1&e_l\in C_j \text{ and the orientations do not agree,} \\0&\text{otherwise.}\end{cases}\]
This enables us to introduce the following Dirac structure and Lagrange submanifold
\begin{equation}\label{KDS'}
\begin{aligned}
\mathcal D'_K(\mathcal{G})\coloneqq&\,\bigg\{(\varphi,u,\iota, i)\in\R^{m-n+k}\times\R^m\times\R^{m-n+k}\times\R^m~\bigg\vert~\begin{bmatrix}I&B\\0&0\end{bmatrix}\begin{pmatrix}\varphi\\ u\end{pmatrix}+\begin{bmatrix}0&0\\B^\top&-I\end{bmatrix}\begin{pmatrix}\iota\\i\end{pmatrix}=0\bigg\},\\
\mathcal L'_K(\mathcal{G})\coloneqq&\,\{0\}\times\R^{n-m+k},
\end{aligned}
\end{equation}
which form the pH system $(\mathcal D'_K(\mathcal{G}),\mathcal L'_K(\mathcal{G}),\{0\})$
with dynamics
\begin{equation}\label{eq:loop_pH}
(-\ddt \psi(t), i(t),\iota(t), u(t))\in\mathcal D'_K(\mathcal{G}),\quad (\psi(t),\iota(t))\in\mathcal L'_K(\mathcal{G}),
\end{equation}
from which, analogous to Remark~\ref{rem-inv}, one can derive that \eqref{eq:loop_pH} is equivalent to $\psi(t)=0$, $Bu(t)=0$ and $i(t)=B^\top \iota$. Since $\im B=\ker A^\top$ \cite[Thm.~4.4]{Rei14}, the relations $u(t)\in\ker B=0$ and $i(t)\in\im B^\top$ respectively represent Kirchhoff's voltage and current law. The quantities $\psi$, $u$, $\iota$ and $i$ can respectively be thought as the cycle fluxes, the edge voltages, the cycle currents and the edge currents.

\section{Electrical circuits as port-Hamiltonian systems}
\noindent
Our essential idea to port-Hamiltonian modelling of electrical circuits is to extend the tuple of voltages across and currents through the edges - in the case where we consider a~vertex-based formulation of the Kirchhoff laws - by vertex charges and potentials, and - in the case where we consider a~loop-based formulation of the Kirchhoff laws - by cycle fluxes and cycle currents, along with an accordant modelling of the graph interconnection structure by means of the approach in the preceding section. The electrical components are modelled by separate pH systems, and thereafter coupled with the one representing the interconnection structure.\\
The circuits may be composed of \emph{two-terminal} and \emph{multi-terminal} components.
We will speak of \emph{$\ell_t$-terminal components}, with $\ell_t\in\N$ denoting the number of terminals \cite{Wil10}.
Each $\ell_t$-terminal component connects $\ell_t$ vertices of the electrical circuit through its terminals. For instance, a resistance has two terminals, whereas a~transistor has three terminals, and a~transformer has four terminals. To regard an electrical circuit as a graph (see Fig.~\ref{circuit-to-graph}), we need to replace the $\ell_t$-terminal components by $\ell_p$ edges between the vertices they connect, for some $\ell_p\in\N$, which we call the number of ports. Such a~device is also called a~{\em $\ell_p$-port component}. This replacement is displayed in Fig.~\ref{circuit-to-graph}. The direction assigned to each edge is not a physical restriction but rather a definition of the \emph{positive direction} of the corresponding voltage and current \cite{Rei14}. The physical properties of the electrical components will be reflected by port-Hamiltonian dynamics on these edges. The replacement of an $\ell_t$-terminal component by $\ell_p$ edges between vertices, i.e., by a graph, is subject to physical modelling. For further details on terminals, ports and their relation, we refer to \cite{Wil10}.

\begin{figure}
	\begin{minipage}[t]{0.45\textwidth}
		\centering
		\includegraphics[scale=0.57]{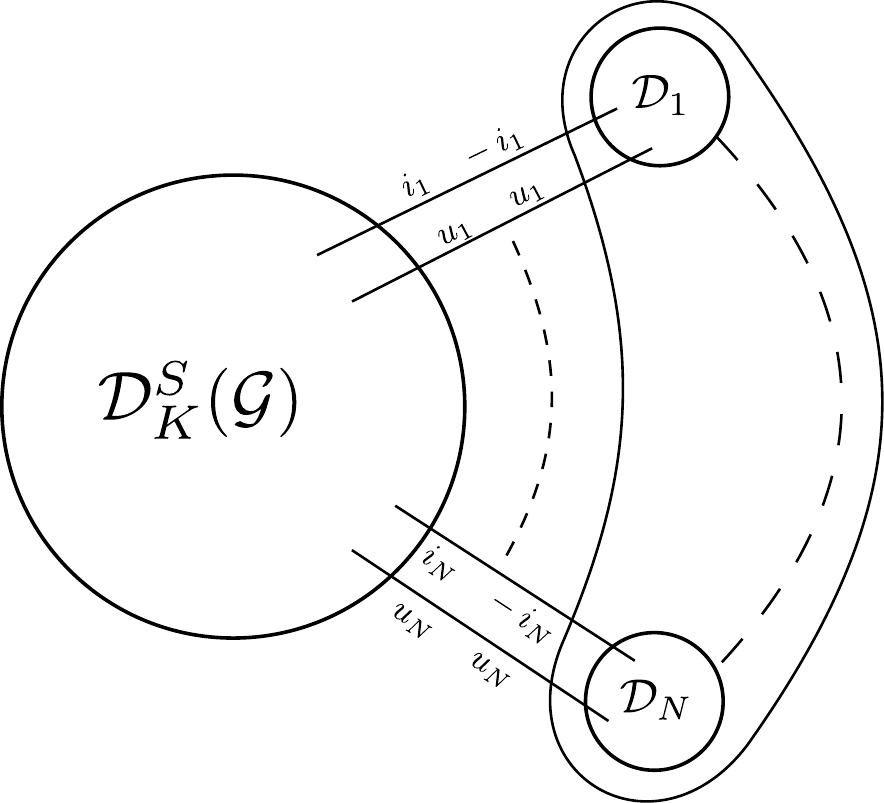}
		\caption{Visual representation of the Dirac structure $\mathcal D$ resulting from the interconnection (\ref{circ-model}).}
	\end{minipage}\hfill
		\centering
    	\begin{minipage}[t]{0.45\textwidth}
		\quad\;\includegraphics[scale=0.35]{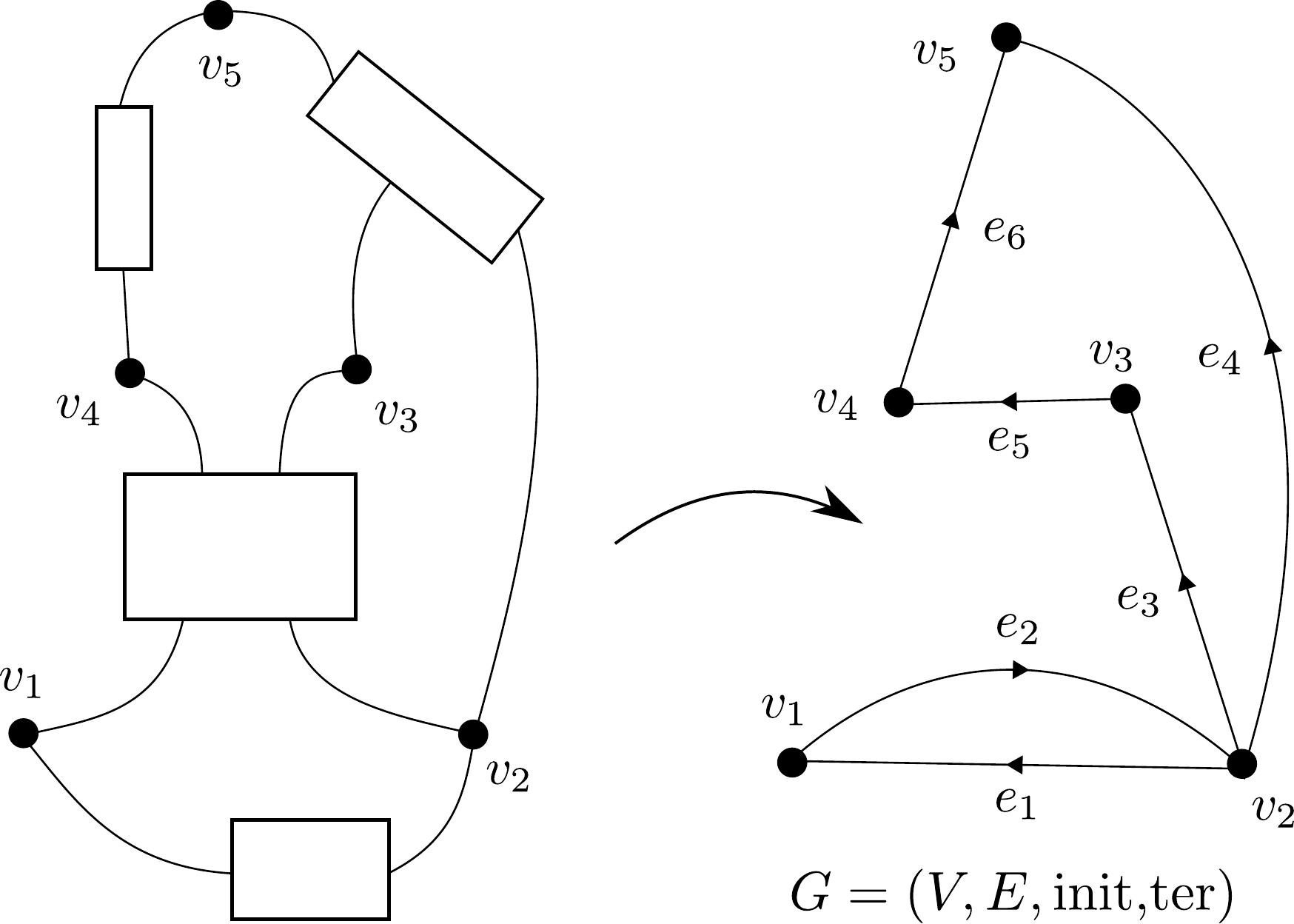}
		\caption{Obtaining the underlying graph of the electrical circuit.}
		\label{circuit-to-graph}
	\end{minipage}
\end{figure}

To be more precise, for $\ell_p,\ell_t\in\N$, an \emph{$\ell_t$-terminal component on $\ell_p$ edges} will be regarded as a pH system $(\mathcal D,\mathcal L,\mathcal R)$, where
$\mathcal D\subset \R^{n_S+n_R+\ell_p}\times\R^{n_S+n_R+\ell_p}$,
with
$\ell_p=n_S+n_R$ for some $n_S,n_R\in\N_0$. We associate to $\mathcal D$ a graph $\mathcal{G}=(V,E,\text{init},\text{ter})$ with $|V|=\ell_t$ and $|E|=\ell_p$ (cf. Fig.~\ref{comp-to-graph}). The external flow and effort variables will always represent the current through \cite[Def.~3.2]{Rei14} and the voltage along \cite[Def.~3.6]{Rei14} the corresponding edges, respectively.

\subsection{Electrical circuits as interconnections of port-Hamiltonian systems}\label{subsec-comp}
\noindent Let an electrical circuit consisting of $N$ electrical components $(\mathcal D_i,\mathcal L_i,\mathcal R_i)_{i\in\{1,\ldots, N\}}$, each with $\ell_{p,i}$ ports, be given, with $N\in\N$ and let $(\mathcal{G}_i)_{i\in\{1,\ldots, N\}}=(V_i,E_i,\text{init}_i,\text{ter}_i)_{i\in\{1,\ldots, N\}}$ be the respective graphs resulting from the physical modelling of the $\ell_{p,i}$-port components (see Fig.~\ref{comp-to-graph}), where we assume that the edge sets $E_1,\ldots,E_N$ are disjoint. We define the \emph{underlying graph of the circuit} $\mathcal{G}$ (see Fig.~\ref{circuit-to-graph}) as
\[\mathcal{G}=(V,E,\text{init},\text{ter})\coloneqq\left(\bigcup_{i=1}^N V_i,\bigcup_{i=1}^N E_i,\text{init},\text{ter}\right),\]
with $\text{init}(e)=\text{init}_i(e)$ and $\text{ter}(e)=\text{ter}_i(e)$ if $e\in E_i$ for some $i\in\{1,...,N\}$ and let $V=\{v_1,\ldots,v_n\}$, $E=\{e_1,\ldots,e_m\}$ for some $n,m\in\N$. Further, let $A_0\in\R^{n\times m}$ be the incidence matrix associated to $\mathcal{G}$ and let $S\subset V$ with property \eqref{eq:S} represent the vertices grounded in the circuit. We model the dynamics of the electrical circuits as the dynamics of the pH system
\begin{equation}\label{circ-model}
(\mathcal D,\mathcal L,\mathcal R)\coloneqq(\mathcal D^S_K(\mathcal{G}),\mathcal L^S_K(\mathcal{G}),\{0\})\circ\left(\bigtimes_{i=1}^N(\mathcal D_i,\mathcal L_i,\mathcal R_i)\right),
\end{equation}
where the interconnection is performed with respect to the flow and effort spaces \[(\mathcal F_{\rm link},\mathcal E_{\rm link})=\left(\bigtimes_{i=1}^N \R^{m_{P_i}},\bigtimes_{i=1}^N \R^{m_{P_i}}\right)=(\R^m,\R^m)\] corresponding to the port variables associated to the currents and voltages of the $\ell_p$-port components.

\subsection{Physical modelling of circuit components as port-Hamiltonian systems}
\noindent
We present a~couple of `prominent' electrical components from a port-Hamiltonian viewpoint; among them are capacitances, inductances, resistances, diodes, transformers, transistors and sources. Note that this list
is by no means complete. In principle, our approach also allows to incorporate components which are modelled by partial differential equations, such as transmission lines and refined models of semiconductor devices. This involves a~further generalization of pH systems on infinite-dimensional spaces and particularly leads to the notion of {\em Stokes-Dirac structure}, see \cite{BKvdSZ10,MvdS04a,MvdS04b}.\\
Throughout this section, $i$ will denote currents and $u$ will denote voltages.
An oftentimes used Dirac structure will be, for $\ell_p\in\N$,
\begin{equation}
\mathcal D_{\ell_p} =\left\{\left(\begin{smallmatrix}-i\\i\\u\\u\end{smallmatrix}\right)\in\R^{4\ell_p}~\bigg\vert\; i,u\in\R^{\ell_p}\right\}.
\label{eq:Diracn}
\end{equation}
It can easily verified that this is indeed a Dirac structure.
The variable $i$ stands for the vector of currents, whereas $u$ is the vector of voltages in the component. Note that a copy of the voltage and negative of the current vector is required, since it is later on eliminated by the interconnection according to Definition~\ref{def:pHint}.
\subsubsection{Capacitances}

\begin{figure}
	\begin{minipage}[t]{0.45\textwidth}
		\centering
		\includegraphics[width=0.9\textwidth]{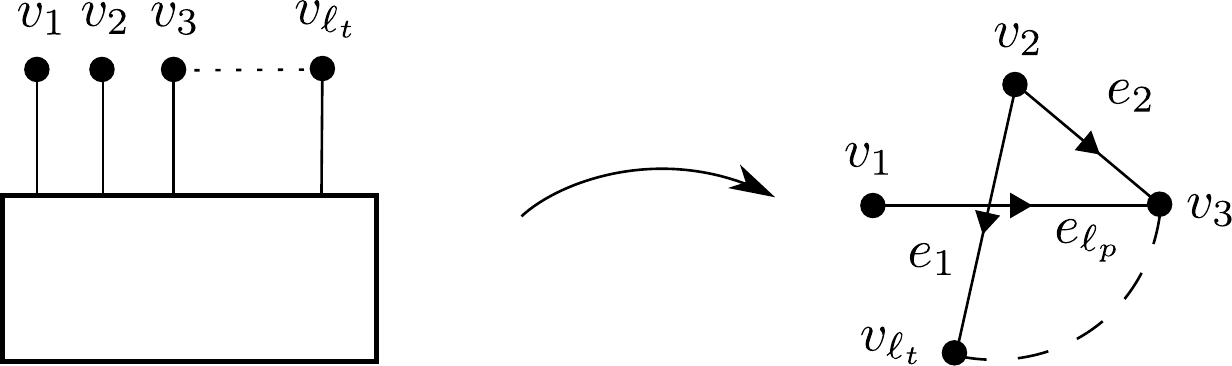}
		\caption{Replacing an $\ell_t$-terminal component by a graph with $\ell_p$ edges.}
		\label{comp-to-graph}
	\end{minipage}\hfill
	\begin{minipage}[t]{0.45\textwidth}
		\centering
		\includegraphics[width=\textwidth]{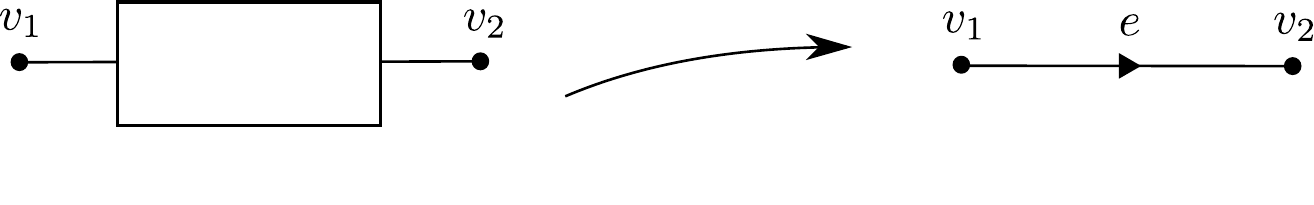}
		\caption{Deriving the underlying graph of a capacitance, conductance, ideal diode, PN-junction diode, inductance, resistance, or sources.}
		\label{2port-to-graph}
	\end{minipage}
\end{figure}

\noindent Let $H_{\cC}\in C^1(\R^{\ell_p},\R)$. A capacitance with $\ell_p$ ports is modelled as a~pH system $(\mathcal D_{\cC},\mathcal L_{\cC},\mathcal R_{\cC})$, where $\mathcal D_{\cC}=\mathcal D_{\ell_p}$ with $\mathcal D_{\ell_p}$ as in \eqref{eq:Diracn}, $\mathcal{R}_{\cC}=\{0\}$, and
\[
\mathcal{L}_{\cC}=\left\{(u_{\cC},q_{\cC})\in\R^{2\ell_p}\vert\ q_{\cC}=\nabla H_{\cC}(u_{\cC})\right\}.
\] The dynamics consequently read
\[(-\ddt q_{\cC}(t),i_{\cC}(t),u_{\cC}(t),u_{\cC}(t))\in\mathcal D_{\cC},\quad (q_{\cC}(t),u_{\cC}(t))\in\mathcal L_{\cC}.\]
Here, $q_{\cC}$ represents the \emph{charge} of the capacitance and the \emph{Hamiltonian} $H_{\cC}$ represents the \emph{energy storage function} of the system. From this pH system, one can derive
\begin{equation*}
	i_{\cC}(t)=\ddt q_{\cC}(t),\quad u_{\cC}(t)=\nabla H_{\cC}(q_{\cC}(t)).
\end{equation*}
If the capacitance has two terminals, then we obtain a~conventional capacitance with one port as in Fig.~\ref{2port-to-graph}.
\subsubsection{Inductances}

\begin{figure}
	\begin{minipage}[t]{0.33\textwidth}
	\centering
~\\[-1.7cm]		\begin{circuitikz}
		\draw
			(4,0) to[C=$\cC$,i^<=$\quad i_{\cC}$,v_<=$u_{\cC}$,*-*] (0,0);
		\end{circuitikz}~\\[-0.25cm]
		\caption{Capacitance: circuit symbol}
	\end{minipage}\hfill
	\begin{minipage}[t]{0.33\textwidth}
		\centering
		\begin{circuitikz}[european voltages, american currents, european resistors]]
		\draw
			(4,0) to[L=$\cL$,i^<=$\quad i_{\cL}$,v_<=$u_{\cL}$,*-*] (0,0);
		\end{circuitikz}
		\caption{Inductance: circuit symbol}
	\end{minipage}
	\begin{minipage}[t]{0.33\textwidth}
		\centering
		\begin{circuitikz}
		\draw
			(4,0) to[R=$\cR$,i^<=$\quad i_{\cR}$,v_<=$u_{\cR}$,*-*] (0,0);
		\end{circuitikz}
		\caption{Resistance/conductance: circuit symbol}
		 \label{G-symbol}
	\end{minipage}\hfill
\end{figure}

\noindent
Let $H_{\cL}\in C^1(\R^{\ell_p},\R)$.
An inductance with $\ell_p$ ports is modelled as a~pH system $(\mathcal D_{\cL},\mathcal L_{\cL},\mathcal R_{\cL})$
with
\[
\mathcal D_{\cL} =\left\{\left(\begin{smallmatrix}-u_{\cL}\\i_{\cL}\\i_{\cL}\\u_{\cL}\end{smallmatrix}\right)\in\R^{4\ell_p}~\bigg\vert~u_{\cL},i_{\cL}\in\R^{\ell_p}\right\}
\]
and
\[\mathcal{L}_{\cL}=\left\{(\psi_{\cL},i_{\cL})\in\R^{2\ell_p}\vert\ i_{\cL}=\nabla H_{\cL}(\psi_{\cL})\right\},\quad \mathcal R_{\cL}=\{0\}.\]
The dynamics are now given by
\[(-\ddt\psi_{\cL}(t),i_{\cL}(t),i_{\cL}(t),u_{\cL}(t))\in\mathcal D_{\cL},\quad (\psi_{\cL}(t),i_{\cL}(t))\in\mathcal L_{\cL},\]
Here, $\psi_{\cL}$ represents the \emph{magnetic flux} of the inductance and the \emph{Hamiltonian} $H_{\cL}\in C^1(\R^{\ell_p},\R)$ represents the \emph{energy storage function} of the system. From this pH system, one can derive
 \begin{equation*}
	u_{\cL}(t)=\ddt \psi_{\cL}(t),\quad  i_{\cL}(t)=\nabla H_{\cL}(\psi_{\cL}(t)).
\end{equation*}
If the inductance has two terminals, then we obtain a~conventional inductance with one port as in Fig.~\ref{2port-to-graph}.

\subsubsection{Conductances and resistances}

\noindent Let $\mathcal R_{\cR}\subset\R^{\ell_p}\times\R^{\ell_p}$ be a~resistive relation. Consider the pH system
$(\mathcal D_{\cR},\mathcal L_{\cR},\mathcal R_{\cR})$, where $\mathcal D_{\cR}=\mathcal D_{\ell_p}$ with $\mathcal D_{\ell_p}$ as in \eqref{eq:Diracn}, $\mathcal L_{\cR}=\{0\}$.
The dynamics are specified by
\begin{equation}
(-i_{\cR}(t),i_{\cR}(t),u_{\cR}(t),u_{\cR}(t))\in\mathcal D_{\cR},\quad (-i_{\cR}(t),u_{\cR}(t))\in\mathcal R_{\cR},\label{eq:res}
\end{equation}
If, for some accretive function $g:\R^{\ell_p}\to\R^{\ell_p}$ (that is, $\phi_{\cR}^\top g(\phi_{\cR})\geq0$ for all $\phi_{\cR}\in\R^{\ell_p}$), $\mathcal R_{\cR}$ reads
\[\mathcal R_{\cR} =\left\{(-i_{\cR},u_{\cR})\in\R^{2\ell_p}\vert i_{\cR}=g(u_{\cR})\right\},\]
then \eqref{eq:res} leads to $i_{\cR}(t)=g(u_{\cR}(t))$. That is, $(\mathcal D_{\cR},\mathcal L_{\cR},\mathcal R_{\cR})$ describes a~conductance with $\ell_p$ ports. On the other hand, if for some accretive function $r:\R^{\ell_p}\to\R^{\ell_p}$,
\[\mathcal R_{\cR} =\left\{(-i_{\cR},u_{\cR})\in\R^{2\ell_p}\vert u_{\cR}=r(i_{\cR})\right\}\]
then \eqref{eq:res} leads to $u_{\cR}(t)=r(i_{\cR}(t))$, i.e.\ $(\mathcal D_{\cR},\mathcal L_{\cR},\mathcal R_{\cR})$ models a~resistance with $\ell_p$ ports.\\
If the conductance/resistance has two terminals, then we obtain a~conventional conductance/resistance with one port as in Fig.~\ref{G-symbol}.

\begin{remark}
Resistances form a~pathological case of a~pH system, since the underlying Lagrange submanifold is trivial (cf.\ Remark~\ref{rem-inv}). Therefore, the `dynamics' of the pH system are actually `statics'.
The same holds for the models diodes, transformers and transistors which are discussed in the sequel.
\end{remark}
%

\subsubsection{Ideal and PN-junction diodes}

\begin{figure}
	 \centering
	\begin{minipage}[t]{0.45\textwidth}
		\centering
		\begin{circuitikz}
		\draw
			(0,0) to[Do,v^>=$u_{\mathpzc{D}}$,*-*,i>_=$\quad i_{\mathpzc{D}}$] (4,0);
		\end{circuitikz}
		\caption{Circuit symbol of a diode.}
	\end{minipage}\hfill
\end{figure}
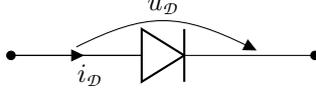

\noindent An {\em ideal diode} is modelled as a two-terminal component $(\mathcal D_{\cDs},\mathcal L_{\cDs},\mathcal R_{\cDs})$ with one port (see Fig.~\ref{2port-to-graph}), and dynamics
\begin{equation*}
(-i_{\cDs}(t),i_{\cDs}(t),u_{\cDs}(t),u_{\cDs}(t))\in\mathcal D_{\cDs},\ (j_{\cDs}(t),\phi_{\cDs}(t))\in\mathcal R_{\cDs},
\end{equation*}
where $\mathcal D_{\cDs}=\mathcal D_{1}$ with $\mathcal D_{1}$ as defined in \eqref{eq:Diracn}, $\mathcal L_{\cD}=\{0\}$ and
\[\mathcal R_{\cD} =\left\{(-i_{\cDs},u_{\cDs})\in\R^2~\vert~ i_{\cDs}u_{\cDs}=0\,\wedge \,i_{\cDs}\leq0\,\wedge \,u_{\cDs}\leq0\right\}.\]
From this pH system, one can derive that
\begin{equation*}
(i_{\cDs}(t),u_{\cDs}(t))\in\left(\{0\}\times\R_{\leq 0}\right)\cup\left(\R_{\geq0}\times\{0\}\right).
\end{equation*}
A {\em PN-junction diode} is modelled as a one-port component $(\mathcal D_{\cDs},\mathcal L_{\cDs},\mathcal R_{\cDs})$ with $\mathcal D_{\cDs}$ and $\mathcal L_{\cDs}$ as for the ideal diode, and the resistive relation is, for some constants $a,b>0$, given by
\[\mathcal R_{\cD} =\left\{(-i_{\cDs},u_{\cDs})\in\R^2~\vert~i_{\cDs}=a\left(e^{\frac{u_{\cDs}}{b}}-1\right)\right\}.\]
From the dynamics of this pH system, one can derive the characteristic equation \cite[Eq.\ (39.46)]{Kuepf17}
\begin{equation*}
i_{\cDs}(t)=a\left(e^{\frac{u_{\cDs}(t)}{b}}-1\right).
\end{equation*}
The PN-junction diode serves as an approximation for an ideal diode. In a~certain sense, the behavior of a~PN-junction diode indeed tends to that of the ideal diode, if $b\to0$.
\subsubsection{Transformers}

\begin{figure}
	\begin{minipage}[t]{0.45\textwidth}
		\centering
		\begin{circuitikz}[scale=0.75, transform shape, european voltages]
		\draw (0,0) node [transformer core](T){}
		(T.A1) node[fill=black,circle,scale=0.5,anchor=east]{}
		($(T.A1)-(0.2,0)$) node[anchor=east]{$v_1$}
		(T.A2) node[fill=black,circle,scale=0.5,anchor=east]{}
		($(T.A2)-(0.2,0)$) node[anchor=east]{$v_2$}
		(T.B1) node[fill=black,circle,scale=0.5,anchor=west]{}
		($(T.B1)+(0.2,0)$) node[anchor=west]{$v_4$}
		(T.B2) node[fill=black,circle,scale=0.5,anchor=west]{}
		($(T.B2)+(0.2,0)$) node[anchor=west]{$v_3$};
		\end{circuitikz}
		\caption{Circuit symbol of a transformer.}
	\end{minipage}\hfill
	\begin{minipage}[t]{0.45\textwidth}
		\centering
		\includegraphics[scale=0.6]{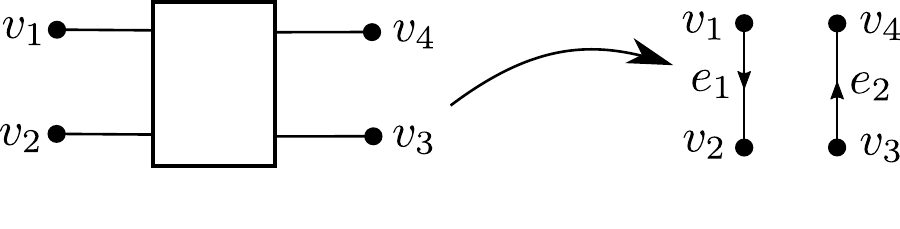}
		\caption{Deriving the underlying graph of a transformer.}
		\label{T-to-graph}
	\end{minipage}
\end{figure}

\noindent A transformer is modelled as a four-terminal component with two ports, see Fig.~\ref{T-to-graph}. It is described by the pH system
$(\mathcal D_{\cT},\mathcal L_{\cT},\mathcal R_{\cT})$, where we use the Dirac structure $\mathcal D_{\cT}=\mathcal D_{2}$ with $\mathcal D_{2}$ as defined in \eqref{eq:Diracn} and trivial Lagrange submanifold $\mathcal L_{\cT}=\{0\}$. The dynamics are given by
\[\begin{aligned}
(-i_{\cT1}(t),-i_{\cT2}(t),i_{\cT1}(t),i_{\cT2}(t),u_{\cT1}(t),u_{\cT2}(t),u_{\cT1}(t),u_{\cT2}(t))\in\mathcal D_{\cT},\\
(-i_{\cT1}(t),-i_{\cT2}(t),u_{\cT1}(t),u_{\cT2}(t))\in\mathcal R_{\cT},
\end{aligned}
\]
with, for some $T\in\R$,
\[
\begin{aligned}
\mathcal R_{\cT} =\left\{(-i_{\cT1},-i_{\cT2},u_{\cT1},u_{\cT2})\in\R^4\;\vert\; Ti_{\cT1}=-i_{\cT2},\ u_{\cT1}=Tu_{\cT2}\right\}.
\end{aligned}\]
From this pH system, one can derive $Ti_{\cT1}(t)=-i_{\cT2}(t)$ and $u_{\cT1}(t)=Tu_{\cT2}(t)$, which means that a~transformer is a~power-conserving component.

\subsubsection{NPN transistors}

\begin{figure}
	\begin{minipage}[t]{0.45\textwidth}
		\centering
		\begin{circuitikz}[scale=0.75, transform shape]
		\draw (0,0) node[npn] (npn) {}
		($(npn.base)-(1,0)$) node[anchor=east] {B}
		($(npn.collector)+(0,0.8)$) node[anchor=west] {C}
		($(npn.emitter)-(0,0.8)$) node[anchor=west] {E};
		\draw ($(npn)-(0.3,0)$) circle [radius=12.1pt];
		\draw ($(npn.base)-(0.8,0)$) node[scale=0.5,circle,fill=black] {} to [short,i=${i_B}$] (npn.base);
		\draw ($(npn.collector)+(0,0.8)$) node[scale=0.5,circle,fill=black] {} to[short,i_=${i_C}$] (npn.collector);
		\draw (npn.emitter) to[short,i=${i_E}$] ($(npn.emitter)-(0,0.8)$) node[scale=0.5,circle,fill=black] {} ;
		\draw ($(npn.base)-(1,0)$) to [open,v^=$u_{BC}$] ($(npn.collector)+(0,0.8)$);
		\draw ($(npn.base)-(1,0)$) to [open,v=$u_{BE}$] ($(npn.emitter)-(0,0.8)$);
		\end{circuitikz}
		\caption{Circuit symbol of a NPN transistor.}
	\end{minipage}\hfill
	\begin{minipage}[t]{0.45\textwidth}
		\centering
		\includegraphics[scale=0.7]{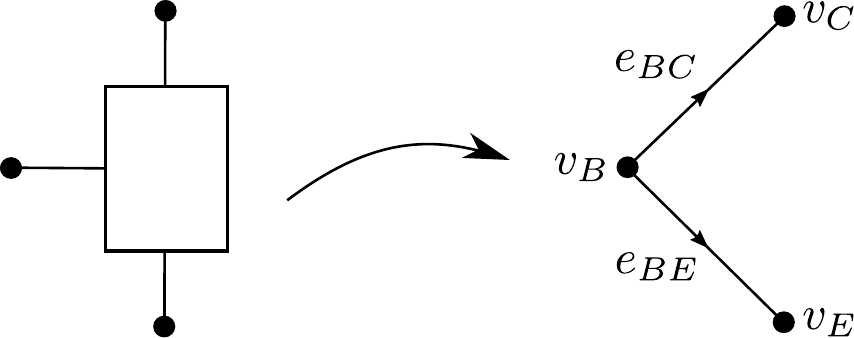}
		\caption{Deriving the underlying graph of an NPN transistor.}
		\label{NPN-to-graph}
	\end{minipage}
\end{figure}

\noindent A~transistor is a~component with three terminals, which are called {\em emitter}, {\em basis} and {\em collector}. We replace this by a~graph with two edges, which are respectively located are between basis and collector, and basis and emitter, see Fig.~\ref{NPN-to-graph}.
The behavior of a transistor {\em of type NPN} is often modelled by the \emph{Ebers-Moll model} \cite[Eqs.\ (5.26) \& (5.27)]{SeSm04}, which can, in a~certain voltage and current range around zero, be summarized by the equations
\begin{equation}
\begin{aligned}
i_C(t)=&~i_S\left(e^{\tfrac{u_{BE}(t)}{V_T}}-1\right)-\tfrac{i_S}{\alpha_R}\left(e^{\tfrac{u_{BC}(t)}{V_T}}-1\right),\\
i_E(t)=&~\tfrac{i_S}{\alpha_F}\left(e^{\tfrac{u_{BE}(t)}{V_T}}-1\right)-i_S\left(e^{\tfrac{u_{BC}(t)}{V_T}}-1\right),
\end{aligned}\label{T1}
\end{equation}
for some constants $\alpha_F\in\left[\tfrac{50}{51},\tfrac{1000}{1001}\right]$, $\alpha_R\in\left[\tfrac{1}{100},\tfrac{1}{2}\right]$, $i_S\in[10^{-15},10^{-12}]$, $V_T\approx\frac{1}{40}$ \cite[pp.~382-394]{SeSm04}. Hereby, $i_C(t)$, $i_E(t)$, $u_{BE}(t)$, $u_{BC}(t)$ respectively denote the collector current, emitter current, basis-emitter voltage and basis collector voltage. Note that, by the Kirchhoff laws, the basis current fulfills $i_B(t)=i_E(t)-i_C(t)$ and the
collector emitter voltage is given by $u_{CE}(t)=u_{BE}(t)-u_{BC}(t)$.
We model an NPN transistor as a `resistive' two-port component $(\mathcal D_{\cN},\mathcal L_{\cN},\mathcal R_{\cN})$ on two edges,
where $\mathcal D_{\cN}=\mathcal D_{2}$ with $\mathcal D_{2}$ as defined in \eqref{eq:Diracn}, $\mathcal L_{\cN}=\{0\}$ and
\[\mathcal R_{\cN} =\left\{(i_{C},-i_{E},u_{BC},u_{BE})\in\R^4\Bigg\vert \begin{array}{l}
i_C=~i_S\left(e^{\tfrac{u_{BE}}{V_T}}-1\right)-\tfrac{i_S}{\alpha_R}\left(e^{\tfrac{u_{BC}}{V_T}}-1\right),\\
i_E=~\tfrac{i_S}{\alpha_F}\left(e^{\tfrac{u_{BE}}{V_T}}-1\right)-i_S\left(e^{\tfrac{u_{BC}}{V_T}}-1\right),
\end{array}\right\}\cap U_0,
\]
where $U_0\subset\R^4$ is a~neighborhood of the origin. The dynamics of the system read
\[(i_{C}(t),-i_{E}(t),-i_C(t),i_E(t),u_{BC}(t),u_{BE}(t),u_{BC}(t),u_{BE}(t))\in\mathcal D_{\cN},\; (i_C(t),-i_E(t),u_{BC}(t),u_{BE}(t))\in\mathcal R_{\cN},\]
which implies \eqref{T1}, at least as long as $(i_C(t),-i_E(t),u_{BC}(t),u_{BE}(t))\in U_0$. Note that we have provided the collector current $i_C(t)$ with another sign, since it is - in contrast to the emitter current and the basis-emitter current - directed contrarily to the basis-collector current.\\
Note that, if we choose $U_0=\R^4$, then the relation $\mathcal R_{\cN}$ is \emph{not} resistive, since for there may exist quadruples $(i_C,-i_E,u_{BC},u_{BE})\in \mathcal R_{\cN}$ holds $i_Cu_{BC}-i_Eu_{BE}>0$. However, we can show that $\mathcal R_{\cN}$ is resistive for a~suitable neighborhood $U_0\subset \R^4$ of the origin. This can be seen as follows: Since for
$(u_{BC},u_{BE})\in (\R\setminus\{0\})^2$ holds
\[\begin{aligned}
&\begin{pmatrix}u_{BC}\\u_{BE}\end{pmatrix}^\top \begin{pmatrix}-\tfrac{i_S}{\alpha_R}\left(e^{\tfrac{u_{BC}}{V_T}}-1\right)+i_S\left(e^{\tfrac{u_{BE}}{V_T}}-1\right)\\i_S\left(e^{\tfrac{u_{BC}}{V_T}}-1\right)-\tfrac{i_S}{\alpha_F}\left(e^{\tfrac{u_{BE}}{V_T}}-1\right)\end{pmatrix}\\
=&\begin{pmatrix}u_{BC}\\u_{BE}\end{pmatrix}^\top \underbrace{\begin{bmatrix}-\tfrac{i_S}{\alpha_R u_{BC}}\left(e^{\tfrac{u_{BC}}{V_T}}-1\right)&\frac{i_S}{u_{BE}}\left(e^{\tfrac{u_{BE}}{V_T}}-1\right)\\\frac{i_S}{u_{BC}}\left(e^{\tfrac{u_{BC}}{V_T}}-1\right)&-\tfrac{i_S}{\alpha_Fu_{BE}}\left(e^{\tfrac{u_{BE}}{V_T}}-1\right)\end{bmatrix}}_{=:A(u_{BC},u_{BE})}
\begin{pmatrix}u_{BC}\\u_{BE}\end{pmatrix}.
\end{aligned}\]
Namely, by using that $A(\cdot,\cdot)$ has a~continuous extension to $\R^2$ with
\[A(0,0)=\frac{i_S}{V_T}\cdot\begin{bmatrix}-\frac{1}{\alpha_R}&1\\1&-\frac{1}{\alpha_F}\end{bmatrix}.\]
By $\alpha_F\in\left[\tfrac{50}{51},\tfrac{1000}{1001}\right]$, $\alpha_R\in\left[\tfrac{1}{100},\tfrac{1}{2}\right]$, we have $\alpha_F\cdot\alpha_R<1$, which leads to negative definiteness of $A(0,0)=\frac12 (A(0,0)+A(0,0)^\top)$. The continuity of $(u_{BC},u_{BE})\mapsto \frac12 (A(u_{BC},u_{BE})+A(u_{BC},u_{BE})^\top)$ implies that there exists some neighborhood $U_0\subset\R^4$ such that this function takes values in the cone of negative definite matrices on $U_0$. This consequences that, by taking this neighborhood $U_0$,  $\mathcal R_{\cN}$ is a~resistive relation.


\subsubsection{Current and voltage sources}

\begin{figure}
	\begin{minipage}[t]{0.45\textwidth}
		\centering
		\begin{circuitikz}[european voltages]
		\draw (3,0) to [I,v_>=$u_{\mathpzc{I}}(t)$,*-*,i>^=$\quad i_{\mathpzc{I}}(t)$] (0,0);
		\end{circuitikz}
		\caption{Circuit symbol of a current source.}
		 \label{IS-symbol}
	\end{minipage}\hfill
	\begin{minipage}[t]{0.45\textwidth}
		\centering
		\begin{circuitikz}[european voltages]
		\draw (3,0) to [V,v_>=$u_{\mathpzc{V}}(t)$,*-*,i>^=$\quad i_{\mathpzc{V}}(t)$] (0,0);
		\end{circuitikz}
		\caption{Circuit symbol of a voltage source.}
		 \label{VS-symbol}
	\end{minipage}
\end{figure}

\noindent The sources of the electrical circuit represent the ports of the system, that is points at which physical interaction of the electrical circuit with the environment happens. We may distinguish two types of sources: \emph{current sources} and \emph{voltage sources}, see Fig.~\ref{IS-symbol} and Fig.~\ref{VS-symbol}. The name indicates which physical variable is controlled or influenced by the environment. This variable is also denoted as \emph{input}, while the other is denoted as \emph{output}. However, this distinction is not relevant for the \emph{geometrical} formulation of pH systems (cf. \cite{MMW18}). We unite both classes under the term \emph{sources}. These have two terminals, and, consequently, one port (see Fig.~\ref{2port-to-graph}). Sources are modelled as a~pH system $(\mathcal D_{\cS},\mathcal L_{\cS},\mathcal R_{\cS})$, where the Dirac structure is $\mathcal D_{\cS}=\mathcal D_{1}$ with $\mathcal D_{1}$ as defined in \eqref{eq:Diracn}, and the Lagrange submanifold and resistive relation are trivial, i.e., $\mathcal L_{\cS}=\mathcal R_{\cS}=\{0\}$. The dynamics are
\begin{equation*}
(-i_{\cS}(t),i_{\cS}(t),u_{\cS}(t),u_{\cS}(t))\in\mathcal D_{\cS}.
\end{equation*}

\begin{example}[AC/DC converter]
We illustrate our methodology by considering an AC/DC converter, which we model by the electrical circuit shown in Fig.\ \ref{ACDC-circ}. The AC/DC converter consists of a source $\mathpzc{S}=(\mathcal D_{\cS},\mathcal L_{\cS},\mathcal R_{\cS})$, a transformer $\mathpzc{T}=(\mathcal D_{\cT},\mathcal L_{\cT},\mathcal R_{\cT})$, four PN-junction diodes $\mathpzc{J}_i=(\mathcal D_{\cJ_i},\mathcal L_{\cJ_i},\mathcal R_{\cJ_i})$ for $i\in\{1,...,4\}$, a capacitor $\mathpzc{C}=(\mathcal D_{\cC},\mathcal L_{\cC},\mathcal R_{\cC})$, and a `sink' $\mathpzc{O}=(\mathcal D_{\cO},\mathcal L_{\cO},\mathcal R_{\cO})$ (modelled like a source), which are connected by the vertices $v_1,...,v_6$ as shown in Fig.~\ref{ACDC-to-graph}. The circuit graph $\mathcal{G}=(V,E,\text{init},\text{ter})$ with $V=\{v_1,...,v_6\}$ and  $E=\{e_1,...,e_9\}$ has two components, and we ground the nodes below the voltage source and the capacitance, i.e., we choose $S=\{v_2,v_3\}$.
Let $A\in\R^{4\times9}$ be obtained from the incidence matrix of $\mathcal{G}$ by deleting the rows corresponding to the grounded nodes. We arrive at a pH system $(\mathcal D,\mathcal L,\mathcal R)$ as in \eqref{circ-model}, whose dynamics read
\[
\left(\ddt\begin{pmatrix}-q_1\\-q_4\\-q_5\\-q_6\\-q_{\cC}\end{pmatrix},\begin{pmatrix}-i_{\cT 1}\\-i_{\cT 2}\\-i_{\cD 1}\\-i_{\cD 2}\\-i_{\cD 3}\\-i_{\cD 4}\end{pmatrix},\begin{pmatrix}-i_{\cV}\\-i_{\cO}\end{pmatrix},\begin{pmatrix}{\phi_1}\\{\phi_4}\\{\phi_5}\\{\phi_6}\\u_{\cC}\end{pmatrix},\begin{pmatrix}u_{\cT 1}\\u_{\cT 2}\\u_{\cD 1}\\u_{\cD 2}\\u_{\cD 3}\\u_{\cD 4}\end{pmatrix},\begin{pmatrix}u_{\cV}\\u_{\cO}\end{pmatrix}\right)\in\mathcal D,
\]
\[
\left(\begin{pmatrix}-q_1\\-q_4\\-q_5\\-q_6\\-q_{\cC}\end{pmatrix},\begin{pmatrix}{\phi_1}\\{\phi_4}\\{\phi_5}\\{\phi_6}\\u_{\cC}\end{pmatrix}\right)\in\mathcal L,\quad\quad \left(\begin{pmatrix}-i_{\cT 1}\\-i_{\cT 2}\\-i_{\cD 1}\\-i_{\cD 2}\\-i_{\cD 3}\\-i_{\cD 4}\end{pmatrix},\begin{pmatrix}u_{\cT 1}\\u_{\cT 2}\\u_{\cD 1}\\u_{\cD 2}\\u_{\cD 3}\\u_{\cD 4}\end{pmatrix}\right)\in\mathcal R.
\]


\begin{figure}
	\begin{minipage}[t]{0.4\textwidth}
		\centering

		\includegraphics[width=\textwidth]{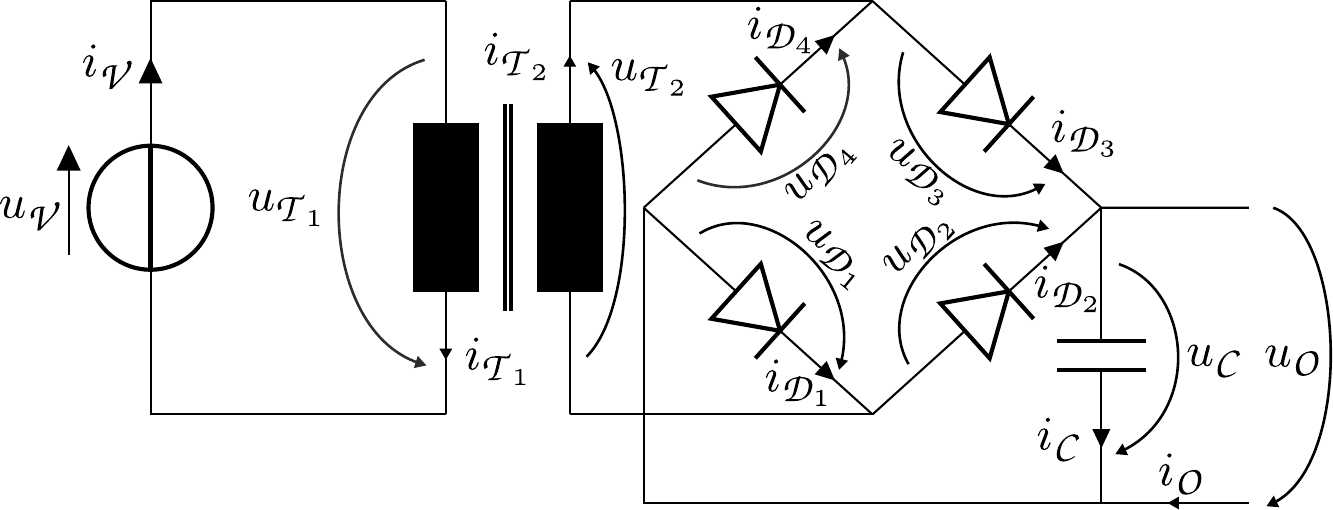}
		\caption{AC/DC converter circuit}
		\label{ACDC-circ}
	\end{minipage}\hfill
	\begin{minipage}[t]{0.55\textwidth}
		\centering
		\includegraphics[width=\textwidth]{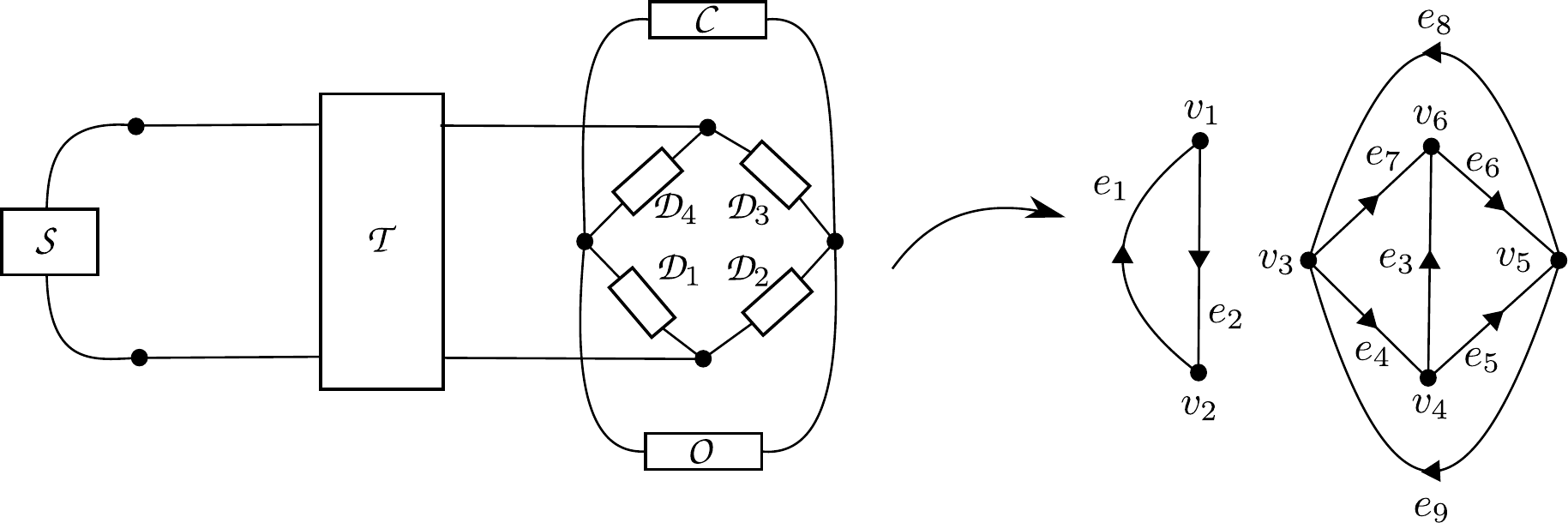}
	\caption{Obtaining the underlying graph of the AC/DC converter}
	\label{ACDC-to-graph}
	\end{minipage}
\end{figure}
\end{example}

\section{Comparison with other formulations of electrical circuits}
\noindent With the attention electrical circuits attracted over the past decades, quite a bunch of \lq standard formulations\rq\ of the dynamics have emerged. An overview of popular models in the context of DAEs is found in \cite{Riaz13}. We compare for certain electrical circuits the dynamics of our port-Hamiltonian modelling (\ref{circ-model}) with other equations used in the modelling of electrical circuits.
\subsection{The (charge/flux-oriented) modified nodal analysis}\label{subsec-MNA}
\noindent Let an electrical circuit consisting of
conductances, inductances, capacitances and sources.
Let
\begin{align*}
&(\mathcal D_{\cR_i},\mathcal L_{\cR_i},\mathcal R_{\cR_i})_{i\in\{1,\ldots,l_{\cR}\}}, &&\!\!\!\!\!\!\!\!\!\!\!\!\!\!\!\!\!\!\!\!\!\!\!\!\!\!\!\!\!\!\!\!\!(\mathcal D_{\cL_i},\mathcal L_{\cL_i},\mathcal R_{\cL_i})_{i\in\{1,\ldots,l_{\cL}\}},\notag\\
&(\mathcal D_{\cC_i},\mathcal L_{\cC_i},\mathcal R_{\cC_i})_{i\in\{1,\ldots,l_{\cC}\}}, &&\!\!\!\!\!\!\!\!\!\!\!\!\!\!\!\!\!\!\!\!\!\!\!\!\!\!\!\!\!\!\!\!\!(\mathcal D_{\cS_i},\mathcal L_{\cS_i},\mathcal R_{\cS_i})_{i\in\{1,\ldots,l_{\cS}\}}.
\end{align*}
be the pH systems modelling the components as derived in Section \ref{subsec-comp}. Let $\ell_{p,\cR i}$ be the number of ports of the component modelled by $(\mathcal D_{\cR_i},\mathcal L_{\cR_i},\mathcal R_{\cR_i})$, and let $\ell_{p,\cL i}$ and $\ell_{p,\cC i}$ be analogously defined. Moreover, let
\[m_{\cR}=\sum_{i=1}^{l_{\cR}}\ell_{p,\cR i},\quad
m_{\cL}=\sum_{i=1}^{l_{\cR}}\ell_{p,\cL i},\quad
m_{\cC}=\sum_{i=1}^{l_{\cC}}\ell_{p,\cR i},\quad m_{\cS}=l_{\cS},\]
and introduce
\[
i_{\cR}=\begin{pmatrix}i_{\cR 1}\\\vdots\\i_{\cR {m_{\cR}}}\end{pmatrix},\;\; i_{\cL}=\begin{pmatrix}i_{\cL 1}\\\vdots\\i_{\cL {m_{\cL}}}\end{pmatrix},\;\; i_{\cC}=\begin{pmatrix}i_{\cC 1}\\\vdots\\i_{\cC {m_{\cC}}}\end{pmatrix},\;\; i_{\cS}=\begin{pmatrix}i_{\cS 1}\\\vdots\\i_{\cS {m_{\cS}}}\end{pmatrix},\;\; i=\begin{pmatrix}i_{\cR}\\i_{\cL}\\i_{\cC}\\i_{\cS}\end{pmatrix},
\]
and analogous notations for $u_{\cR}$, $u_{\cL}$, $u_{\cC}$, $u_{\cS}$, $u$, as well as
\[
q_{\cC}=\begin{pmatrix}q_{\cC 1}\\\vdots\\q_{\cC {m_{\cC}}}\end{pmatrix},\quad \psi_{\cL}=\begin{pmatrix}\psi_{{\cL}1}\\\vdots\\\psi_{{\cL}m_{\cL}}\end{pmatrix},\quad g(u_{\cR})=\begin{pmatrix}g_1(u_{\cR 1})\\\vdots\\g_{m_{\cR}}(u_{{\cR}m_{\cR}})\end{pmatrix},\]
\[
H_{\cC}(q_{\cC})=\sum_{i=1}^{m_{\cC}}H_{\cC i}(q_{\cC i}),\quad H_{\cL}(\psi_{\cL})=\sum_{i=1}^{m_{\cL}} H_{\cL i}(\psi_{{\cL}i}).
\]
Further, let $\mathcal{G}=(V,E,\text{init},\text{ter})$ be the graph induced by the electrical circuit with $|V|=n$ and $|E|=m$.
Let $S$ be the set of grounded vertices (cf.\ Definition~\ref{def-K}), and let  $A\in\R^{(n-|S|)\times m}$ be obtained from the incidence matrix of $\mathcal{G}$ by deleting the rows corresponding to the vertices in $S$.
By a~suitable reordering, we may sort into edges to the specific components, i.e.,
\begin{equation*}
A=\begin{bmatrix}A_{\cR}&A_{\cL}&A_{\cC}&A_{\cS}\end{bmatrix},
\end{equation*}
where the columns of $A_{\cR}\in\R^{(n-|S|)\times m_{\cR}}$, $A_{\cL}\in\R^{(n-|S|)\times m_{\cL}}$, $A_{\cC}\in\R^{(n-|S|)\times m_{\cC}}$ and $A_{\cS}\in\R^{(n-|S|)\times m_{\cS}}$ respectively represent the edges corresponding to conductances, inductances, capacitances and sources.
For the representation of the port-Hamiltonian dynamics of the electrical circuit, first note that the Dirac structure of the pH system
\[\bigtimes_{i=1}^{m_{\cR}} (\mathcal D_{\cR_i},\mathcal L_{\cR_i},\mathcal R_{\cR_i})\times\bigtimes_{i=1}^{m_{\cL}}\mathcal (\mathcal D_{\cL_i},\mathcal L_{\cL_i},\mathcal R_{\cL_i})\times\bigtimes_{i=1}^{m_{\cC}}\mathcal (\mathcal D_{\cC_i},\mathcal L_{\cC_i},\mathcal R_{\cC_i})\times\bigtimes_{i=1}^{m_{\cS}}\mathcal (\mathcal D_{\cS_i},\mathcal L_{\cS_i},\mathcal R_{\cS_i})\]
is given by
\begin{align*}
\mathcal D_{\rm prod}
&=\Bigg\{\begin{pmatrix}-i_{\cL},-i_{\cC},-i_{\cR},i_{\cR},i_{\cL},i_{\cC},-i_{\cS},i_{\cS},i_{\cL},u_{\cC},u_{\cR},u_{\cR},u_{\cL},u_{\cC},u_{\cS},u_{\cS}\end{pmatrix}\in\R^{2m}\times\R^{2m}~\vert\\[-0.3cm]
&\quad\quad\quad\quad\quad\quad\quad\quad\quad\quad\quad\quad i_{\cL},u_{\cL}\in\R^{m_{\cL}},\ i_{\cC},u_{\cC}\in\R^{m_{\cC}},\  i_{\cR},u_{\cR}\in\R^{m_{\cR}},\ i_{\cS},u_{\cS}\in\R^{m_{\cS}}\bigg\}
\end{align*}
and
\[\begin{aligned}
\mathcal D^S_K(\mathcal{G})=\Bigg\{\begin{pmatrix}j,i_{\cR},i_{\cL},i_{\cC},i_{\cS},\phi, u_{\cR},u_{\cL},u_{\cC},u_{\cS}\end{pmatrix}\in\R^{n-|S|}\times\R^m\times\R^{n-|S|}\times\R^m~\big\vert\quad\quad\quad\quad\quad\quad\quad\\\left.\begin{bmatrix}I&A_{\cR}&A_{\cL}&A_{\cC}&A_{\cS}\\0&0&0&0&0\\0&0&0&0&0\\0&0&0&0&0\\0&0&0&0&0\end{bmatrix}\begin{pmatrix}j\\i_{\cR}\\i_{\cL}\\i_{\cC}\\i_{\cS}\end{pmatrix}+\begin{bmatrix}0&0&0&0&0\\-A_{\cR}^\top&I&0&0&0\\-A_{\cL}^\top&0&I&0&0\\-A_{\cC}^\top&0&0&I&0\\-A_{\cS}^\top&0&0&0&I\end{bmatrix}\begin{pmatrix}\phi\\u_{\cR}\\u_{\cL}\\u_{\cC}\\u_{\cS}\end{pmatrix}=0\right\}.
\end{aligned}\]
It follows that the Dirac structure of
\[(\mathcal D^S_K(\mathcal{G}),\mathcal L^S_K(\mathcal{G}),\{0\})\circ\left(\bigtimes_{i=1}^N(\mathcal D_i,\mathcal L_i,\mathcal R_i)\right)\]
is given by
\begin{subequations}\label{RLC-pH}
\begin{equation}\label{RLC-Dirac}
\begin{aligned}\mathcal D=\Bigg\{&\begin{pmatrix}j,-u_{\cL},-i_{\cC},-i_{\cR},-i_{\cS},\phi,i_{\cL},u_{\cC},u_{\cR},u_{\cS}\end{pmatrix}\in\R^{n-|S|}\times\R^m\times\R^{n-|S|}\times\R^m~\big\vert\quad\quad\quad\quad\quad\quad\quad\\&\left.\begin{bmatrix}I&0&A_{\cC}&A_{\cR}&A_{\cS}\\0&0&0&0&0\\0&-I&0&0&0\\0&0&0&0&0\\0&0&0&0&0\end{bmatrix}\begin{pmatrix}j\\-u_{\cL}\\-i_{\cC}\\-i_{\cR}\\-i_{\cS}\end{pmatrix}+\begin{bmatrix}0&-A_{\cL}&0&0&0\\-A_{\cR}^\top&0&0&I&0\\-A_{\cL}^\top&0&0&0&0\\-A_{\cC}^\top&0&I&0&0\\-A_{\cS}^\top&0&0&0&I\end{bmatrix}\begin{pmatrix}\phi\\i_{\cL}\\u_{\cC}\\u_{\cR}\\u_{\cS}\end{pmatrix}=0\right\},
\end{aligned}
\end{equation}
whereas the Lagrange submanifold and resistive relation read
\begin{align}
\mathcal{L}&=
\Bigg\{\begin{pmatrix}q,\psi_{\cL},q_{\cC},\phi,i_{\cL},u_{\cC}\end{pmatrix}\in\R^{n-|S|}\times\R^{m_{\cL}}\times\R^{m_{\cC}}\times\R^{n-|S|}\times\R^{m_{\cL}}\times\R^{m_{\cC}} ~\Big\vert\nonumber\\[-3mm]&\qquad\qquad\left.q=0\,\wedge\,i_{\cL}=\nabla H_{\cL}(\psi_{\cL})\,\wedge\,u_{\cC}=\nabla H_{\cC}(q_{\cC})\right.\Bigg\},\\[0mm]
\mathcal{R}&=\Big\{(-i_{\cR},u_{\cR})\in\R^{m_{\cR}}\times\R^{m_{\cR}}\;\vert\; i_{\cR}=g(u_{\cR})\Big\}.
\end{align}
\end{subequations}
The triple $(\mathcal D,\mathcal L,\mathcal R)$
with $\mathcal D$, $\mathcal L$ and $\mathcal R$ as in \eqref{RLC-pH} is the port-Hamiltonian representation of a~circuit with conductances, inductances, capacitances and sources in a compact form.
The dynamics of $(\mathcal{D},\mathcal{L},\mathcal{R})$ read
\begin{align*}
&(-\ddt q(t),-\ddt\psi_{\cL}(t),-\ddt q_{\cC}(t),-i_{\cR}(t),-i_{\cS}(t),\phi(t),i_{\cL}(t),u_{\cC}(t),u_{\cR}(t),u_{\cS}(t))\in\mathcal D,\\ &(q(t),\psi_{\cL}(t),q_{\cC}(t),\phi(t),i_{\cL}(t),u_{\cC}(t))\in\mathcal L,\quad (-i_{\cR}(t),e_{\cR}(t))\in\mathcal R,
\end{align*}
which is equivalent to
\[\begin{aligned}
&\begin{bmatrix}I&0&A_{\cC}&A_{\cR}&A_{\cS}\\0&0&0&0&0\\0&-I&0&0&0\\0&0&0&0&0\\0&0&0&0&0\end{bmatrix}\begin{pmatrix}-\ddt q(t)\\-\ddt \psi_{\cL}(t)\\-\ddt q_{\cC}(t)\\-i_{\cR}(t)\\-i_{\cS}(t)\\\end{pmatrix}+\begin{bmatrix}0&-A_{\cL}&0&0&0\\-A_{\cR}^\top&0&0&I&0\\-A_{\cL}^\top&0&0&0&0\\-A_{\cC}^\top&0&I&0&0\\-A_{\cS}^\top&0&0&0&I\end{bmatrix}\begin{pmatrix}\phi(t)\\i_{\cL}(t)\\u_{\cC}(t)\\u_{\cR}(t)\\u_{\cS}(t)\end{pmatrix}=0,\\[2mm]
&q(t)=0,\;\; i_{\cL}(t)=\nabla H_{\cL}(\psi_{\cL}(t)),\;\; u_{\cC}(t)=\nabla H_{\cC}(q_{\cC}(t)),\;\; i_{\cR}(t)=g(u_{\cR}(t)).
\end{aligned}\]
Plugging in the latter relations, we obtain
\begin{equation}
\begin{aligned}
                    A_{\cC}\ddt q_{\cC}(t)+A_{\cR}g(A_{\cR}^\top\phi(t))+A_{\cL}i_{\cL}(t)+A_{\cS}i_{\cS}(t)&=0,\\
					-A_{\cL}^\top\phi(t)+\ddt\psi_{\cL}(t)&=0,\\
					-A_{\cS}^\top\phi(t)+u_{\cS}(t)&=0,\\
					A_{\cC}^\top\phi(t)-\nabla H_{\cC}(q_{\cC}(t))&=0,\\
					i_{\cL}(t)-\nabla H_{\cL}(\psi(t))&=0.
\end{aligned}\label{unsers}
\end{equation}
If we additionally assume that $\nabla H_{\cC}\in C^1(\R^{m_{\cC}},\R^{m_{\cC}})$, $\nabla H_{\cL}\in C^1(\R^{m_{\cL}},\R^{m_{\cL}})$ are homeomorphisms, we can introduce the inverse functions $Q_{\cC}:=(\nabla H_{\cC})^{-1}\in C(\R^{m_{\cC}},\R^{m_{\cC}})$, $\Psi_{\cL}:=(\nabla H_{\cL})^{-1}\in C(\R^{m_{\cL}},\R^{m_{\cL}})$. Then \eqref{unsers} leads to
$q_{\cC}(t)=Q_{\cC}(u_{\cC}(t))$ and $\psi_{\cL}(t)=\Psi_{\cL}(i_{\cL}(t))$. Further decomposing
\[A_{\cS}=\begin{bmatrix}A_{\cI}&A_{\cV}\end{bmatrix},\quad
u_{\cS}=\begin{pmatrix}u_{\cI}\\u_{\cV}\end{pmatrix},\quad
i_{\cS}=\begin{pmatrix}i_{\cI}\\i_{\cV}\end{pmatrix}\]
into edges, voltages and currents to current and voltage sources, we see that \eqref{unsers} leads to the so-called \emph{charge/flux-oriented modified nodal analysis} \cite[Eq.\ (3.21)]{Baec07}
\begin{equation}
\begin{aligned}
A_{\cC}\ddt q_{\cC}(t)+A_{\cR}g(u_{\cR}(t))+A_{\cL}i_{\cL}(t)+A_{\cI}i_{\cI}(t)+A_{\cV}i_{\cV}(t)=&\,0,\\
-A_{\cL}^\top \phi(t)+\ddt \psi_{\cL}(t)=&\,0,\\
-A_{\cV}^\top \phi(t)+u_{\cV}(t)=&\,0,\\
q_{\cC}(t)-Q_{\cC}(A_{\cC}^\top\phi(t))=&\,0,\\
\psi_{\cL}(t)-\Psi_{\cL}(i_{\cL}(t))=&\,0.
\end{aligned}\tag{MNA c/f}\label{MNA c/f}
\end{equation}
If we additionally assume that
$Q_{\cC}\in C(\R^{m_{\cC}},\R^{m_{\cC}})$ and $\Psi_{\cL}\in C^1(\R^{m_{\cL}},\R^{m_{\cL}})$, then we can, by denoting the Jacobians  by
$\cCl(u_{\cC})=\tfrac{\mathrm{d}}{\mathrm{d}u_{\cC}}Q_{\cC}(u_{\cC})$ and $\cLl(i_{\cL})=\tfrac{\mathrm{d}}{\mathrm{d}i_{\cL}}\Psi_{\cL}(i_{\cL})$, reformulate \eqref{MNA c/f} to obtain the \emph{modified nodal analysis} \cite[Eq.\ (52)]{Rei14}
\begin{equation}
\begin{aligned}
 A_{\cC}\cCl(A_{\cC}^\top \phi(t))A_{\cC}^\top \ddt{\phi}(t)+A_{\cR}g(A_{\cR}^\top \phi(t))+A_{\cL}i_{\cL}(t)+A_{\cI}i_{\cI}(t)+A_{\cV}i_{\cV}(t)=&\,0,\\
-A_{\cL}^\top \phi(t)+\cLl(i_{\cL}(t))\ddt{i_{\cL}}(t)=&\,0,\\
-A_{\cV}^\top \phi(t)+u_{\cV}(t)=&\,0.
\end{aligned}\tag{MNA}\label{MNA}
\end{equation}
Note that, if $H_{\cC}\in C^2(\R^{m_{\cC}},\R)$, $H_{\cL}\in C^2(\R^{m_{\cL}},\R)$, then $\cCl(u_{\cC})$ and $\cLl(i_{\cL})$ are, respectively, the inverses of the Hessians of $H_{\cC}$ and $H_{\cL}$ at $Q_{\cC}(u_{\cC})$ and $\Psi_{\cL}(i_{\cL})$.

\subsection{The (charge/flux-oriented) modified loop analysis}
\noindent We present an alternative modelling involving the pH system $(\mathcal D'_K(\mathcal{G}),\mathcal L'_K(\mathcal{G}),\{0\})$ with $\mathcal D'_K(\mathcal{G})$ and $\mathcal L'_K(\mathcal{G})$ as in \eqref{KDS'}. That is, the loops in the underlying graph structure is now taken to model the Kirchhoff laws.
First note that the external flows and efforts variables in the pH system $(\mathcal D^S_K(\mathcal{G}),\mathcal L^S_K(\mathcal{G}),\{0\})$ in Remark~\ref{rem-inv} are, respectively, the current and the voltage of the components, while the external flows and efforts variables in $(\mathcal D'_K(\mathcal{G}),\mathcal L'_K(\mathcal{G}),\{0\})$ are, respectively, the voltage and the current of the components. This means that in order to obtain a pH system $(\mathcal D',\mathcal L',\mathcal R')$ describing the circuit dynamics by performing an interconnection of $(\mathcal D'_K(\mathcal{G}),\mathcal L'_K(\mathcal{G}),\{0\})$ with $N\in\N$ electrical components $(\mathcal D_i,\mathcal L_i,\mathcal R_i)_{i\in\{1,\ldots, N\}}$, i.e.,
\begin{equation*}
(\mathcal D',\mathcal L',\mathcal R')\coloneqq(\mathcal D'_K(\mathcal{G}),\mathcal L'_K(\mathcal{G}),\{0\})\circ\left(\bigtimes_{i=1}^N(\mathcal D_i,\mathcal L_i,\mathcal R_i)\right),
\end{equation*}
we have to adjust the definition of the components by interchanging the role of the effort and flow variables, which is possible by an argument similar to one in Remark \ref{rem-inv}. Given an electrical circuit consisting of resistances, inductances, capacitances and sources, it can, completely analogous to Section \ref{subsec-MNA}, be shown that the dynamics of $(\mathcal D',\mathcal L',\mathcal R')$
lead, under certain additional invertibility and smoothness assumptions on the functions representing capacitances and inductances, to the \emph{modified loop analysis} \cite[Eq.\ (53)]{Rei14}
\begin{equation*}
\begin{aligned}
B_{\cL}\cLl(B_{\cL}^\top\iota(t))B_{\cL}^\top\ddt{\iota}(t)+B_{\cR}r(B_{\cR}^\top\iota(t))+B_{\cC}u_{\cC}(t)+B_{\cI}u_{\cI}(t)+B_{\cV}u_{\cV}(t)=&\,0,\\
-B_{\cC}^\top\iota(t)+\cCl(u_{\cC}(t))\ddt{u_{\cC}}(t)=&\,0,\\
-B_{\cI}^\top\iota(t)+i_{\cI}(t)=&\,0.
\end{aligned}
\end{equation*}

\bibliographystyle{plain}
\bibliography{pH-circuits}

\begin{thebibliography}{10}

\bibitem{And91}
B.~Andr\'asfai.
\newblock {\em Graph Theory: Flows, Matrices}.
\newblock Taylor \& Francis, New York London, 1991.

\bibitem{Baec07}
S.~B\"{a}chle.
\newblock {\em Numerical Solution of Differential-Algebraic Systems Arising in
  Circuit Simulation}.
\newblock PhD thesis, Fakult\"at II - Mathematik und Naturwissenschaften,
  Technische Universit{\"a}t Berlin, Berlin, Germany, 2007.

\bibitem{BCGM18}
M.~Barbero-Li{\~n}\'an, H.~Cendra, E.~Garc\'ia-Tora{\~{n}}o Andr\'es, and
  D.~Mart\'in de~Diego.
\newblock New insights in the geometry and interconnection of
  port-{H}amiltonian systems.
\newblock {\em J. Phys. A}, 51(37):375201 (30 p.), 2018.

\bibitem{BMXZ18}
C.~Beattie, V.~Mehrmann, H.~Xu, and H.~Zwart.
\newblock Linear port-{H}amiltonian descriptor systems.
\newblock {\em Math.\ Control Signals Syst.}, 30(4), 2018.
\newblock Article: 17.

\bibitem{BKvdSZ10}
J.~Behrndt, M.~Kurula, A.J. {van der Schaft}, and H.~Zwart.
\newblock {D}irac structures and their composition on {H}ilbert spaces.
\newblock {\em J.\ Math.\ Anal.\ Appl.}, 372(2):402--422, 2010.

\bibitem{BMvdS95}
P.C. Breedveld, B.~Maschke, and A.J. van~der Schaft.
\newblock An intrinsic hamiltonian formulation of the dynamics of
  {LC}-circuits.
\newblock {\em IEEE Trans.\ Circuits Syst.\ I. Regul.\ Pap.}, 42(2):73--82,
  1995.

\bibitem{CvdSB07}
J.~Cervera, A.J. van~der Schaft, and A.~Ba{\~{n}}os.
\newblock Interconnection of port-{H}amiltonian systems and composition of
  dirac structures.
\newblock {\em Automatica}, 43(2):212--225, 2007.

\bibitem{Cou90}
T.J. Courant.
\newblock Dirac manifolds.
\newblock {\em Trans. Amer. Math. Soc.}, 319:631--661, 1990.

\bibitem{Dies17}
R.~Diestel.
\newblock {\em Graph theory}, volume 173 of {\em Graduate Texts in
  Mathematics}.
\newblock Springer, Berlin, 5th edition, 2017.

\bibitem{JvdS14}
D.~Jeltsema and A.J. van~der Schaft.
\newblock Port-{H}amiltonian systems theory: An introductory overview.
\newblock {\em Foundations and Trends in Systems and Control}, 1(2-3):173--387,
  2014.

\bibitem{Lee12}
J.M. Lee.
\newblock {\em Introduction to Smooth Manifolds}, volume 218 of {\em Graduate
  Texts in Mathematics}.
\newblock Springer, New York, 2nd edition, 2012.

\bibitem{MvdS18}
B.~Maschke and A.J. van~der Schaft.
\newblock Generalized port-{H}amiltonian {DAE} systems.
\newblock {\em Systems \& Control Letters}, 121:31--37, 2018.

\bibitem{MvdS19}
B.~Maschke and A.J. van~der Schaft.
\newblock {D}irac and {L}agrange algebraic constraints in nonlinear
  port-{H}amiltonian systems.
\newblock Technical report, 2019.

\bibitem{Kuepf17}
W.~Mathis and A.~Reibiger.
\newblock {\em K\"upfm\"uller Theoretische Elektrotechnik}.
\newblock Springer Vieweg, Berlin, 20th edition, 2017.

\bibitem{MMW18}
C.~Mehl, V.~Mehrmann, and M.~Wojtylak.
\newblock Linear algebra properties of dissipative {H}amiltonian descriptor
  systems.
\newblock {\em SIAM J.\ Matrix Anal.\ Appl.}, 39(3):1489--1519, 2018.

\bibitem{MvdS04a}
C.~Melchiorri and A.J. {van der Schaft}.
\newblock Port {H}amiltonian formulation of infinite dimensional systems - i.
  modeling.
\newblock In {\em 43rd IEEE Conference on Decision and Control, December 14-17,
  2004, Atlantis, Paradise Island, Bahamas}, 2004.

\bibitem{MvdS04b}
C.~Melchiorri and A.J. {van der Schaft}.
\newblock Port {H}amiltonian formulation of infinite dimensional systems - ii.
  boundary control by interconnection.
\newblock In {\em 43rd IEEE Conference on Decision and Control, December 14-17,
  2004, Atlantis, Paradise Island, Bahamas}, 2004.

\bibitem{Rei14}
T.~Reis.
\newblock Mathematical modeling and analysis of nonlinear time-invariant {RLC}
  circuits.
\newblock In P.~Benner, R.~Findeisen, D.~Flockerzi, U.~Reichl, and
  K.~Sundmacher, editors, {\em Large-Scale Networks in Engineering and Life
  Sciences}, Modeling and Simulation in Science, Engineering and Technology,
  pages 125--198. Birkh\"auser, Basel, 2014.

\bibitem{Riaz13}
R.~Riaza.
\newblock {DAE}s in circuit modelling: A survey.
\newblock In Achim Ilchmann and Timo Reis, editors, {\em Surveys in
  Differential-Algebraic Equations I}, Differential-Algebraic Equations Forum,
  pages 97--136. Springer, Berlin-Heidelberg, 2013.

\bibitem{SeSm04}
A.S. Sedra and K.C. Smith.
\newblock {\em Microelectronic Circuits}.
\newblock Oxford University Press, New York, 5th edition, 2004.

\bibitem{vdS10}
A.J. van~der Schaft.
\newblock Characterization and partial synthesis of the behavior of resistive
  circuits at their terminals.
\newblock {\em Systems \& Control Letters}, 59(7):423--428, 2010.

\bibitem{vdS13}
A.J. van~der Schaft.
\newblock Port-{H}amiltonian differential-algebraic systems.
\newblock In A.~Ilchmann and T.~Reis, editors, {\em Surveys in
  Differential-Algebraic Equations I}, Differential-Algebraic Equations Forum,
  pages 173--226. Springer, Berlin Heidelberg, 2013.

\bibitem{vdS17}
A.J. van~der Schaft.
\newblock {\em $L\sb 2$-Gain and Passivity Techniques in Nonlinear Control}.
\newblock Lecture Notes in Control and Information Sciences. Springer, London,
  3rd edition, 2017.

\bibitem{vdSM13a}
A.J. van~der Schaft and B.~Maschke.
\newblock Port-{H}amiltonian systems on graphs.
\newblock {\em SIAM J. Control Optim.}, 51(2):906--937, 2013.

\bibitem{VvdS10a}
A.~Venkatraman and A.~van~der Schaft.
\newblock Energy shaping of port-hamiltonian systems by using alternate passive
  input-output pairs.
\newblock {\em Eur. J. Control}, 16(6):665--677, 2010.

\bibitem{VvdS10b}
A.~Venkatraman and A.~van~der Schaft.
\newblock Interconnections of port-{H}amiltonian systems: generating new
  passive outputs and feedback stabilization.
\newblock {\em IFAC Proceedings Volumes}, 43(14):605--610, 2010.

\bibitem{Wil10}
J.C. Willems.
\newblock Terminals and ports.
\newblock {\em IEEE Circuits and Systems Magazine}, 10(4):8--16, 2010.

\end{thebibliography}

\end{document}